\documentclass[reqno]{amsart}
\usepackage[margin = 1.4in]{geometry}
\usepackage{amsmath, amssymb, amsthm, fancyhdr, verbatim, graphicx}
\usepackage{enumerate}
\usepackage{caption}
\usepackage[all]{xy}
\usepackage{MnSymbol}
\usepackage[dvipsnames]{xcolor}
\usepackage{mathrsfs}
\usepackage{tikz-cd}
\usepackage{framed}
\usepackage[hidelinks, pagebackref]{hyperref}
\hypersetup{colorlinks=true, linkcolor=Violet,citecolor=Emerald}
\usepackage[titletoc]{appendix}
\usepackage{bbm}
\usepackage{lipsum}
\usepackage{adjustbox}
\usepackage{mathfyz}
\usepackage{mathzyw}
\usepackage{envi}
\usepackage{stmaryrd}
\usepackage{leftindex}
\usepackage[english]{babel}
\usepackage{MnSymbol}
\numberwithin{equation}{section}

\makeatletter
\def\th@remark{%
  \thm@headfont{\bfseries}%
  \normalfont
  \thm@preskip \thm@preskip 
  \thm@postskip\thm@preskip
}
\def\imod#1{\allowbreak\mkern5mu({\operator@font mod}\,\,#1)}
\makeatother

\title[Characteristic $2$ Symplectic Commuting pairs]{Commuting pairs in symplectic Lie algebras over finite fields of characteristic two}

\author{Liam May}
\address{Department of Mathematics, Massachusetts Institute of Technology, 77 Massachusetts Avenue, Cambridge, MA 02139, USA}
\email{liammay2@mit.edu}

\author{Yong Yang}
\address{Department of Mathematics, MCS 470, 601 University Drive, San Marcos, TX 78666-4684}
\email{yang@txstate.edu}

\begin{document}

\begin{abstract}
    We find a generating function for the number of ordered commuting pairs of elements in a symplectic Lie algebra over a finite field of characteristic $2$, following the analogous work done in odd characteristic in \cite{FulmanGuralnick2016}.
\end{abstract}

\maketitle
\tableofcontents

\section{Introduction}

Let $q$ be a fixed power of $2$, and $V$ a $2n$-dimensional symplectic vector space over $k = \F_q$, equipped with symplectic form $B$. Then $\sp(V)$ is the associated Lie algebra. Let $N_n(q)$ denote the number of ordered pairs of commuting elements of the associated Lie algebra, that is \[
    N_n = \#\{(X,Y) \in \sp(V) \times \sp(V) \mid [X,Y] = 0\}.
\]
The goal of this paper is to compute a generating function for the normalized quantities \[
    \overline N_n(q) = \frac{N_n(q)}{|\Sp_{2n}(q)|}.
\] 
Fulman and Guralnick computed generating functions for the analogous quantities regarding $\mathfrak{gl}_n(\F_q)$, finite unitary Lie algebras, and for $\mathfrak{sp}_{2n}(\F_q)$ in odd characteristic \cite{FulmanGuralnick2016}. They do not treat the characteristic $2$ symplectic case, and note that there are several difficulties.

\section{Preliminaries}

\subsection{Basic Definitions}

Let $q$ be a fixed power of $2$, $k = \F_q$, and we fix a \emph{symplectic vector} space $V \cong \F_q^{2n}$. That is, $V$ is equipped with a \emph{symplectic form} $B: V\times V \to k$ which is \emph{alternating}, meaning $B(v,v) = 0$, and \emph{nondegenerate}, meaning the map $v\mapsto B(-, v)$ is an isomorphism of $V\to V^*$.
Then the subgroup of $\GL(V)$ which respects the symplectic form $B$, \[
   \Sp(V) := \{g\in \GL \mid B(gv, gw) = B(v,w) \;\forall v,w\in V\},
\] is the \emph{symplectic group} of $V$ over $k$. The \emph{Lie algebra} $\sp(V)$ of $\Sp(V)$ may be written as \[
    \sp(V) := \{X\in \End(V) \mid B(Xv, w) = B(v, Xw)\;\forall v,w\in V\}.
\] For any two $2n$-dimensional symplectic vector spaces $W'$ and $W''$ over $k$, equipped with symplectic forms $B'$ and $B''$, we say that an isomorphism $\phi:W'\to W''$ is an \emph{isometry} if $\phi$ moreover respects the forms. It is a fact that any two symplectic vector spaces of the same dimension are isometric, thereby inducing an isomorphism of symplectic Lie algebras. In light of this, it is reasonable to use the notation $\Sp(2n, q)$ and $\sp(2n, q)$ without reference to $V$.

\medskip
\subsection{Centralizer Presentation}
The object of this paper is the set of ordered commuting pairs $(X,Y)$ in $\sp(V)^2$. Fixing the first entry $X$, it is clear that the admissible second entries $Y$ are, by definition, the elements of the Lie algebra centralizer $C_{\sp(V)}(X)$. Therefore we can express the quantity $N_n(q)$ as \[
    N_n(q) = \sum_{X\in \sp(V)} | C_{\sp(V)}(X)|.
\] 
Further, $\Sp(V)$ has a well defined action on $\sp(V)$ by conjugation, and \[
    C_{\sp(V)}(X) \cong C_{\sp(V)}(gXg^{-1}).
\]
Since $|C_{\sp(V)}(-)|$ is therefore constant on $\Sp(V)$-orbits, we may write \[
    N_n(q) = \sum_{[X]} |[X]|\cdot |C_{\sp(V)}(X)|,
\]
where the sum is taken over $\Sp(V)$-orbits $[X]$ in $\sp(V)$, and $X$ is a representative of $[X]$. By the Orbit-Stabilizer Theorem, \[
    |[X]| = \frac{|\Sp(V)|}{|C_{\Sp(V)}(X)|}, 
\]
so we may write \[
    N_n(q) = \sum_{[X]} \frac{|\Sp(V)|}{|C_{\Sp(V)}(X)|} \cdot |C_{\sp(V)}(X)|. 
\]
Rewriting as \[
    \overline N_n(q) = \sum_{[X]} \frac{|C_{\sp(V)}(X)|}{|C_{\Sp(V)}(X)|}, \tag{1}\label{tag:1}
\]
it is clear why we consider the normalized $\overline N_n(q)$. We have reduced to the problem of understanding the group centralizer and Lie algebra centralizer of $X$.

\section{A Symplectic Orthogonal Decomposition}

\subsection{Overview} 

The purpose of this section and the next is to provide a decomposition of the centralizers in the numerator and denominator of summands in $(\ref{tag:1})$. In this section, we take the primary decomposition of $V$ determined by $X$, and show that indeed this decomposition is orthogonal with respect to the symplectic form on $V$, where each orthogonal summand can be viewed as a symplectic vector space of its own, defined over an extension of $k$. We also draw attention to the nilpotent component of the restriction of $X$ to each primary component, showing it to be a symplectic element. In the next section, we demonstrate the $\Sp(V)$-orbits in $\sp(V)$, over which the summation $(\ref{tag:1})$ is taken, correspond to nilpotent orbits in the symplectic Lie algebras of each primary summand.

\subsection{The Decomposition}

For a given $X\in \End(V)$, we may view $V$ as a $k[t]$-module by letting $t$ act on $V$ as $X$. By the Structure Theorem for a finite generated module over a PID, there is a decomposition\[
    V \cong \bigoplus_{f} V_f(X)
\]
ranging over monic irreducible $f\in k[t]$, called the \emph{primary decomposition} of $V$ (with respect to $X$). Here, the summands are \[
    V_f(X) = \{v\in V\mid f(X)^r v = 0,\;\; r>> 0\}.
\]
This is well-defined, since the nested chain of kernels of $f(X)^r$ stabilizes. Notice that for any nontrivial summand $V_f(X)$ of the primary decomposition, $f$ divides the minimal polynomial $F$ of $X$. For suppose that $f(X)^r$ is not injective, but $f^r$ and $F$ are coprime, say $af^r + bF = 1$. Then $a(X)f(X)^r + b(X) F(X) = a(X)f(X)^r$ is not injective, a contradiction. The critical structural fact that we can observe is that, when $X\in \sp(V)$, this decomposition is orthogonal and nondegenerate with respect to the symplectic structure on $V$. We say that two subspaces $W', W''\subseteq V$ are \emph{orthogonal}, written $W'\perp W''$, if $B(w', w'') = 0$ for all $w'\in W'$ and $w''\in W''$. We say that a subspace $W\subseteq V$ is \emph{nondegenerate} if $B$ restricted to $W$ is nondegenerate.

\begin{lemma}\label{lemma:orthog}
    Let $X\in \sp(V)$, and let $V = \bigoplus_f V_f(X)$ be the associated primary decomposition. Then for all $f\ne g$, $V_f(X) \perp V_g(X)$, and moreover each $V_f(X)$ is nondegenerate.
\end{lemma}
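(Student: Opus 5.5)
The plan is to exploit the fact that, with the definition of $\sp(V)$ adopted in this paper, every $X\in\sp(V)$ is \emph{self-adjoint} with respect to $B$: $B(Xv,w)=B(v,Xw)$ for all $v,w\in V$. In characteristic $2$ this is the same as the usual skew-adjointness condition $B(Xv,w)+B(v,Xw)=0$. In odd characteristic one would instead have to pair $f$ with $f(-t)$; here that sign issue disappears, and each primary component pairs only with itself.

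The first step is to iterate the self-adjointness relation. This gives $B(X^iv,w)=B(v,X^iw)$ for all $i\ge 0$. By linearity, $B(p(X)v,w)=B(v,p(X)w)$ for every polynomial $p\in k[t]$, so every $p(X)$ is again self-adjoint.

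Next comes orthogonality. Fix monic irreducibles $f\ne g$, and take $v\in V_f(X)$ and $w\in V_g(X)$. Choose $r,s$ large enough that $f(X)^rv=0$ and $g(X)^sw=0$. Since $f\ne g$ are distinct monic irreducibles, $f^r$ and $g^s$ are coprime, so there exist $a,b\in k[t]$ with $af^r+bg^s=1$; this is the same B\'ezout argument used just before the lemma. Then
\[
B(v,w)=B\bigl(a(X)f(X)^rv+b(X)g(X)^sv,\,w\bigr)=B\bigl(b(X)g(X)^sv,\,w\bigr)=B\bigl(v,\,b(X)g(X)^sw\bigr)=0,
\]
where the first term vanishes because $f(X)^rv=0$. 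The third equality uses the self-adjointness of the polynomial $b(X)g(X)^s$. The final equality uses $g(X)^sw=0$, since $b(X)$ and $g(X)^s$ commute. Hence $V_f(X)\perp V_g(X)$.

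Finally, nondegeneracy follows formally. Suppose $v\in V_f(X)$ satisfies $B(v,u)=0$ for all $u\in V_f(X)$. By the orthogonality just proved, $B(v,u)=0$ also for all $u\in V_g(X)$ with $g\ne f$. Since $V=\bigoplus_g V_g(X)$, it follows that $B(v,-)$ vanishes on all of $V$, and nondegeneracy of $B$ on $V$ forces $v=0$. So the restriction of $B$ to $V_f(X)$ is nondegenerate.

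There is no serious obstacle. The only point requiring care is the observation that polynomials in $X$ remain self-adjoint, which is exactly where characteristic $2$ is used.
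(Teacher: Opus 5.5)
Your proof is correct and follows essentially the same route as the paper: orthogonality comes from self-adjointness of polynomials in $X$ together with coprimality of $f$ and $g$, and nondegeneracy comes from the identical argument via the direct sum decomposition. The only cosmetic difference is that the paper uses invertibility of $f(X)^r$ on $V_g(X)$ to write $w=f(X)^ru$ and move $f(X)^r$ onto $v$, whereas you use B\'ezout to write $v=b(X)g(X)^sv$ and move $b(X)g(X)^s$ onto $w$.
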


\begin{proof}
    To show orthogonality, take nonzero elements $v\in V_f(X)$ and $w\in V_g(X)$ for distinct monic irreducibles $f$ and $g$. By assumption, $f(X)^r v = 0$ for all $r$ sufficiently large. Additionally, for all $r>0$, $f(X)^r$ is injective on $V_g(X)$, hence an isomorphism. So we may write $w = f(X)^r u$ for some $u\in V_g(X)$. Then \[
        B(v,w) = B(v, f(X)^r u) = B(f(X)^rv, u) = B(0,u) = 0.
    \]
    We have used the fact that since $X$ is self-adjoint for $B$ by assumption, so is $f(X)^r$.

    It remains to show that $B$ is nondegenerate on each $V_f$. Suppose $v\in V_f(X)$ satisfies $B(v,u) = 0$ for all $u\in V_f(X)$. Also $B(v,u) = 0$ for all $u\in V_g$ for $g \ne f$, by orthogonality, and hence for all $u\in V$. Since $B$ is nondegenerate on $V$, $v = 0$. Thus $B$ is nondegenerate on $V_f$. 
\end{proof}

Now fix a monic irreducible polynomial $f\in k[t]$ of degree $d$, and consider $X_f = X|_{V_f(X)}$. Assume that $V_f(X)$ is a nonzero component of the primary decomposition. This restriction $X_f$ is well-defined as $V_f(X)$ is $X$-stable. It is well-known that $X_f$ admits a unique \emph{Jordan-Chevalley decomposition} $X_f = S_{X,f} + N_{X,f}$ where $S_{X,f}$ is semisimple, $N_{X,f}$ is nilpotent, and $S_{X,f} N_{X,f} = N_{X,f} S_{X,f}$. It follows from \cite[Prop. 1.22]{MilneLAG} that in fact $S_{X,f}$ and $N_{X,f}$ are polynomials in $X_f$. We call $S_{X,f}$ the \emph{semisimple component} of $X_f$ and $N_{X,f}$ the \emph{nilpotent component}.

Set $K_f = k[t]/(f(t)) \cong \F_{q^d}$, and write $K_f = k(\alpha_f)$, where $\alpha_f$ is the image of $t$ in $K_f$ and hence a root of $f$. We claim that $V_f(X)$ has the structure of a $K_f$-vector space, and that the nilpotent part $N_{X,f}$ is $K_f$-linear. To give $V_f(X)$ the structure of a $K_f$ vector space, the idea is to let $\alpha_f$ act as the semisimple part $S_{X,f}$, which we explain below.

\medskip

First note that the minimal polynomial of $S_{X,f}$ divides $f$, therefore $f(S_{X,f}) = 0$. This can be seen as follows. After basechange to $\overline k$, there is a decomposition \[
    V_f(X)' := V_f(X)\otimes_{k}\overline k \cong \bigoplus_\beta E_\beta
\]
into eigenspaces of $S_{X,f}$, as $S_{X,f}$ is semisimple and $k$ is perfect. Since $S_{X,f}$ and $N_{X,f}$ commute, $N_{X,f}$ preserves each $E_\beta$, so the restriction of $X_f$ to each $E_\beta$ takes the form $\beta I + N$ for some nilpotent $N$. Thus every eigenvalue $\beta$ of $S_{X,f}$ is also an eigenvalue of $X_f$. Since we assumed that $f(X_f)^r = 0$ for some $r$, the minimal polynomial of $X_f$ divides $f^r$, and therefore all eigenvalues of $X_f$ are roots of $f$ in $\overline \F_q$. Therefore the minimal polynomial of $S_{X,f}$ divides $f$ as claimed. 

The action of $K_f$ on $V_f(X)$ is given as follows. First consider the map \[
    \F_q[t] \longrightarrow \End_k(V_f)
\]
which sends $t\mapsto S_{X,f}$. Now since $f(S_{X,f}) = 0$, this descends to a map \[
    K_f \longrightarrow \End_k(V_f),
\]
as needed. Finally, we show that the nilpotent component $N_{X,f}$ is $K_f$-linear. For an element $a = p(\alpha_f)$ of $K_f$, where $p$ is a polynomial, $N_{X,f}(a\cdot v) = N_{X,f}(p(S_{X,f}) v) = p(S_{X,f})N_{X,f}(v)$, which shows that $N_{X,f}$ is $K_f$-linear.  

\medskip

Having shown that each $V_f(X)$ is a $K_f$-vector space, we now aim to show that this vector space is moreover symplectic. To define the symplectic form of $V_f(X)$, we use the field trace on $K_f/k$. Recall the \emph{field trace}, denoted $\Tr = \Tr_{K_f / k}$, is the map $\Tr : K_f \to k$ given by $x\mapsto x + x^q + \dots + x^{q^{d-1}}$. The associated \emph{trace pairing} $\langle -,- \rangle : K_f \times K_f \to \F_q$ defined by $\langle x,y \rangle = \Tr(xy)$. We will use that this pairing is perfect, thus, there is a canonical isomorphism \[
    K_f \longrightarrow \Hom_{k}(K_f, k)
\]
given by $x\mapsto (y \mapsto \Tr(xy))$.

\begin{lemma}\label{lemma:2-2}
    There exists a unique $K_f$-bilinear form \[
        B_f : V_f(X) \times V_f(X) \to K_f
    \]
    which satisfies \[
        \Tr(a B_f(v,w)) = B(av,w),
    \]
    for all $a\in K_f$, and this form is moreover alternating and nondegenerate.
\end{lemma}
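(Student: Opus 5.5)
Fix $v,w\in V_f(X)$ and consider the map $\lambda_{v,w}: K_f\to k$ given by $a\mapsto B(av,w)$. Since $a$ acts as $p(S_{X,f})$, which is $k$-linear in $a$, the map $\lambda_{v,w}$ is $k$-linear. Because the trace pairing is perfect, the canonical isomorphism $K_f\to \Hom_k(K_f,k)$ gives a unique element $B_f(v,w)\in K_f$ with $\Tr(aB_f(v,w)) = B(av,w)$ for all $a\in K_f$. This settles existence and uniqueness of a function $B_f$ satisfying the identity, so any $K_f$-bilinear form with the property must be this one. The rest of the argument checks that $B_f$ has the required properties, each time by showing that both sides have the same image under the injective map $x\mapsto(a\mapsto \Tr(ax))$.

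\emph{Bilinearity.} Additivity in each variable follows from additivity of $B$ and injectivity. For $K_f$-linearity in the first slot, take $b\in K_f$. Then
\[
\Tr(aB_f(bv,w)) = B(abv,w) = \Tr(ab\,B_f(v,w)),
\]
so $B_f(bv,w)=bB_f(v,w)$. For the second slot I will use the key fact that each $a\in K_f$ acts self-adjointly for $B$. Indeed, $S_{X,f}$ is a polynomial in $X_f$, and $X$ is self-adjoint (this is how $\sp(V)$ is defined here, as in the proof of Lemma~\ref{lemma:orthog}). Hence every polynomial in $S_{X,f}$ is self-adjoint on $V_f(X)$, using that $V_f(X)$ is $X$-stable. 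Then
\[
\Tr(aB_f(v,bw)) = B(av,bw) = B(bav,w) = \Tr(ab\,B_f(v,w)),
\]
which gives $B_f(v,bw)=bB_f(v,w)$.

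\emph{Alternating.} We have $\Tr(aB_f(v,v)) = B(av,v)$, and we need this to vanish for every $a$. This is the step I expect to be the main obstacle, since self-adjointness only gives $B(av,v)=B(v,av)$, and in characteristic $2$ that is automatic from symmetry, so it yields nothing. My plan is to write $a$ as a polynomial in $s=S_{X,f}$ and treat powers of $s$ separately. For even exponents, $B(s^{2j}v,v) = B(s^jv,s^jv)=0$ by self-adjointness and the alternating property of $B$. For odd exponents, $B(s^{2j+1}v,v)=B(s\,s^jv,s^jv)$, so it suffices to show $B(su,u)=0$ for all $u$. This should follow from the characteristic $2$ convention for $\sp$, under which $B(Yu,u)=0$ for $Y\in\sp$, together with the need to check that $S_{X,f}$ (as a polynomial in $X_f$) inherits this. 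If the paper's definition does not include this condition, I would instead use Frobenius: $K_f$ is perfect of characteristic $2$, so every $a$ is a square $a=c^2$. Then $B(av,v)=B(c^2v,v)=B(cv,cv)=0$ using self-adjointness of $c$. This second route avoids the issue entirely, so I will adopt it. Finally, $\Tr(aB_f(v,v))=0$ for all $a$ forces $B_f(v,v)=0$ by perfectness of the trace pairing.

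\emph{Nondegeneracy.} If $B_f(v,w)=0$ for all $w$, then $B(v,w)=\Tr(1\cdot B_f(v,w))=0$ for all $w\in V_f(X)$. By Lemma~\ref{lemma:orthog}, $B$ is nondegenerate on $V_f(X)$, so $v=0$.
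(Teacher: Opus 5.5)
Your proposal is correct and follows the paper's proof essentially step for step. You use the perfect trace pairing for existence and uniqueness, self-adjointness of the $K_f$-action for linearity in the second slot, surjectivity of $x\mapsto x^2$ on $K_f$ for the alternating property, and nondegeneracy of $B$ on $V_f(X)$ from Lemma~\ref{lemma:orthog} for nondegeneracy. (The paper's identity $B(av,av)=\Tr(a^2B_f(v,v))$ is your $B(c^2v,v)=B(cv,cv)$ read in the other direction.) You were right to discard your first route for the alternating property: under the paper's definition an element $Y\in\sp(V)$ need not satisfy $B(Yu,u)=0$ in characteristic $2$ (the nilpotent of type $V(2)$ is an example), so the Frobenius argument is the one that works.
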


\begin{proof}
    Given a pair $v,w\in V_f$, consider the map $a\mapsto B(av,w)$, which is a $k$-linear functional on $K_f$, and thereby corresponds to a unique element of $K_f$, which we denote $B_f(v,w)$. That is, choose the unique element satisfying $\Tr(aB_f(v,w)) = B(av,w)$.

    This assignment is $K_f$-bilinear. Given $b\in K_f$, for all $a\in K_f$, $\Tr(aB_f(bv,w)) = B(abv, w) = \Tr(abB_f(v,w))$. By nondegeneracy of the trace pairing, $B_f(bv,w) = bB_f(v,w)$, giving linearity in the first entry. Also, $\Tr(aB_f(v,bw)) = B(av,bw) = B(abv, w) = \Tr(abB_f(v,w))$ for all $a\in K_f$, so again by nondegeneracy, $B_f(v,bw) = b B_f(v,w)$, giving linearity in the second entry.

    The bilinear form $B_f$ is nondegenerate. Suppose that $B_f(v,w) = 0$ for all $w\in V_f$. Then taking $a = 1$, we have $\Tr(B_f(v,w)) = B(v,w) = 0$ for all $w$, thus by nondegeneracy $v = 0$.

    Finally, $B_f$ is alternating. For all $a\in K_f$, $B(av,av) = 0$, and $B(av,av) = \Tr(a^2B_f(v,v))$. Since the map $x\mapsto x^2$ is an isomorphism of $K_f$, indeed $\Tr(b B_f(v,v)) = 0$ for all $b\in K_f$, so by nondegeneracy we get $B_f(v,v) = 0$.
\end{proof}

In light of the above lemma, we can consider the symplectic Lie algebra $\sp(V_f, K_f)$ with respect to $B_f$.

\begin{lemma}
    The nilpotent component $N_{X,f}$ of $X_f$ lies in $\sp(V_f, K_f)$.
\end{lemma}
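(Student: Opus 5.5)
The plan is to check the defining adjointness condition for $N := N_{X,f}$ directly with respect to $B_f$, transferring it from the corresponding property of $X$ with respect to $B$ via the characterization $\Tr(aB_f(v,w)) = B(av,w)$ of Lemma~\ref{lemma:2-2}. We already know $N$ is $K_f$-linear, so it is an element of $\End_{K_f}(V_f)$. Membership in $\sp(V_f,K_f)$ then amounts to
\[
B_f(Nv,w) = B_f(v,Nw) \quad \text{for all } v,w\in V_f(X).
\]
This is the condition the paper uses to define $\sp$. Since $K_f$ has characteristic $2$, it coincides with the skew-adjointness condition $B_f(Nv,w) = -B_f(v,Nw)$, so there is no discrepancy between the two conventions.

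First, I would note that $N$ is adjoint-invariant for $B$ on $V_f(X)$. By \cite[Prop. 1.22]{MilneLAG}, $N = p(X_f)$ for some $p \in k[t]$. Since $X \in \sp(V)$, we have $B(Xv,w) = B(v,Xw)$ for all $v,w\in V$. This restricts to $V_f(X)$, which is $X$-stable, and it passes to powers of $X_f$ and to $k$-linear combinations of them. Hence $B(Nv,w) = B(v,Nw)$ for all $v,w \in V_f(X)$.

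Next, I would fix $v,w$ and an arbitrary $a\in K_f$ and compute, using the $K_f$-linearity of $N$:
\[
\Tr\bigl(aB_f(Nv,w)\bigr) = B(aNv,w) = B(N(av),w) = B(av,Nw) = \Tr\bigl(aB_f(v,Nw)\bigr).
\]
Because this holds for every $a\in K_f$ and the trace pairing is perfect, it follows that $B_f(Nv,w) = B_f(v,Nw)$, which is what we need.

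There is no serious obstacle here. The only points requiring care are that $N$ is a polynomial in $X_f$ with coefficients in $k$, so that it inherits adjointness for $B$, and that $N$ commutes with the $K_f$-action, so that $a$ can be moved past $N$. Both facts were established above.
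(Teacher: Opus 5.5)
Your proposal is correct and is essentially the paper's proof: the same chain $\mathrm{Tr}(aB_f(Nv,w)) = B(aNv,w) = B(Nav,w) = B(av,Nw) = \mathrm{Tr}(aB_f(v,Nw))$, followed by nondegeneracy of the trace pairing. The one addition is that you justify explicitly why $N$ is self-adjoint for $B$, namely that it is a $k$-polynomial in $X_f$; the paper uses this step without comment.
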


\begin{proof}
    From nondegeneracy of the trace pairing, the relation \[
        \begin{aligned}
            \Tr(aB_f(N_{X,f} v,w)) &= B(aN_{X,f} v,w) = B(N_{X,f} av, w) = B(av, N_{X,f}w)\\
            &= \Tr(aB_f(v, N_{X,f}w))
        \end{aligned}
    \] for all $a\in K_f$ implies that $B_f(N_{X,f}v, w) = B_f(v, N_{X,f} w)$, as desired.\end{proof}

\section{Local Nilpotent Orbit Data Parametrization}

\subsection{Overview}

The purpose of this section is to decompose the group centralizer and Lie algebra centralizer of $X$ appearing in the summands of $(\ref{tag:1})$. We will ultimately show that the Lie algebra centralizer of $X$ decomposes as the direct sum of the $\sp(V_f(X))$-centralizers of the local nilpotent components $N_{X,f}$, and analogously for the group centralizer. The main technical observation is that the $\Sp(V)$-orbits of $\sp(V)$, over which we sum in $(\ref{tag:1})$, are equivalent to the data of the $\sp(V_f(X))$-orbits of the nilpotent parts $N_{X,f}$ of any chosen representative $X$ in the corresponding Lie groups $\Sp(V_f(X))$. From this reparameterization, the desired decompositions emerge easily.

\subsection{Parametrization By Local Nilpotent Orbit Data}

We now introduce an invariant of elements of $\sp(V)$ which will allow us to parametrize the $\Sp(V)$-orbits in $\sp(V)$. Given $X\in \sp(V)$, by Lemma~\ref{lemma:orthog} we obtain an orthogonal decomposition $V = \bigperp_f V_f(X)$ over the monic irreducible factors $f$ of the minimal polynomial of $X$. Each $V_f(X)$ admits a symplectic group $\Sp(V_f, K_f)$ and Lie algebra $\sp(V_f, K_f)$ with the symplectic form $B_f$. Moreover, the nilpotent component $N_{X,f}$ of $X_f$ is in $\sp(V_f)$. Let $\cO_f(X)$ denote the $\Sp(V_f)$-orbit of $N_{X,f}$. Suppose that $\dim_{K_f} V_f = 2m_f$, and $\deg(f) = d_f$. Then clearly $n = \sum_f d_f m_f$. We can package this data arising from $X$ as follows. 

\begin{defn}
    We call $\{(f, \cO_f)\}_f$ a \emph{local datum}, where $f$ ranges over monic irreducible polynomials $f\in k[t]$, if the following hold.
    
    \begin{itemize}
        \item[(1)] Each $\cO_f$ is a nilpotent $\Sp(2m_f, q^{d_f})$-orbit in $\sp(2m_f, q^{d_f})$.
        \item[(2)] For all $f$, the degree of $f$ is $d_f$, and $\sum_f m_f d_f = n$.
        \item[(3)] The datum has finite support: $m_f=0$ for all but finitely many $f$, and $\cO_f=\{0\}$ when $m_f=0$.
    \end{itemize}
    By the above discussion, it is clear that $X\in \sp(V)$ defines a local datum, which we denote $\sF_X=\{(f, \cO_f(X))\}_f$, the \emph{local datum associated to $X$}. Write $X\sim Y$ if $X$ and $Y$ lie in the same $\Sp(V)$-orbit.
\end{defn}

\begin{prop}\label{prop:3-1}
    The $\Sp(V)$-orbits of $\sp(V)$ are parametrized exactly by local data. That is, for $X$ and $Y$ in $\sp(V)$, $X\sim Y$ if and only if $\sF_X = \sF_Y$, and additionally each local datum $\{(f, \cO_f)\}_f$ can be realized as $\sF_Z$ for some $Z\in \sp(V)$.
\end{prop}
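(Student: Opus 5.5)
The plan is to prove the two directions of the ``if and only if'' separately and then handle realizability by an explicit gluing construction.

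\emph{Forward direction.} Suppose $Y = gXg^{-1}$ with $g\in \Sp(V)$. Since $g$ intertwines the $k[t]$-module structures, it maps $V_f(X)$ onto $V_f(Y)$ for every $f$. It therefore carries $X_f$ to $Y_f$, and, by uniqueness of the Jordan--Chevalley decomposition, it carries $S_{X,f}$ to $S_{Y,f}$ and $N_{X,f}$ to $N_{Y,f}$. Consequently $g_f := g|_{V_f(X)}$ is $K_f$-linear for the two $K_f$-structures, since $\alpha_f$ acts by the semisimple parts. It also respects $B$. By the uniqueness clause of Lemma~\ref{lemma:2-2}, from $\Tr(aB_f(g v,g w)) = B(agv,gw) = B(g(av),gw)= B(av,w)$ we get that $g_f$ is an isometry from $(V_f(X),B_f)$ to $(V_f(Y),B_f)$. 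Fixing identifications of both with a standard symplectic $K_f$-space of dimension $2m_f$ (the dimensions agree because $g_f$ is bijective), we conclude that $N_{X,f}$ and $N_{Y,f}$ define the same $\Sp(2m_f,q^{d_f})$-orbit. I should remark here that $\cO_f(X)$ is only well defined up to such a choice of isometry, and that this choice is immaterial because any two choices differ by an element of $\Sp(2m_f,q^{d_f})$.

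\emph{Converse direction.} Suppose $\sF_X=\sF_Y$. For each $f$ choose a $K_f$-isometry $h_f: V_f(X)\to V_f(Y)$ with $h_f N_{X,f} h_f^{-1} = N_{Y,f}$; this exists because the orbits agree. Because $h_f$ is $K_f$-linear, it intertwines $S_{X,f}$ (multiplication by $\alpha_f$) with $S_{Y,f}$, and hence intertwines $X_f$ with $Y_f$. Because $B(v,w)=\Tr(B_f(v,w))$ (take $a=1$), the map $h_f$ is also a $k$-isometry for $B$. Let $h=\bigoplus_f h_f$. By Lemma~\ref{lemma:orthog} the decompositions are orthogonal on both sides, so $h\in \Sp(V)$, and $hXh^{-1}=Y$.

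\emph{Realizability.} Given a local datum, for each $f$ with $m_f>0$ take a symplectic $K_f$-space $(W_f,\beta_f)$ of dimension $2m_f$ and a nilpotent $N_f\in\cO_f$. View $W_f$ as a $k$-space with the form $\Tr\circ\beta_f$. This form is alternating, since $\Tr(0)=0$, and nondegenerate by perfectness of the trace pairing. Let $V' = \bigperp_f W_f$, which has $k$-dimension $2\sum m_fd_f=2n$, and fix an isometry $V'\cong V$. Define $Z$ on $W_f$ as $\alpha_f\cdot + N_f$. Then $Z$ is self-adjoint, because scalar multiplication is self-adjoint for $\Tr\circ\beta_f$ and so is $N_f$; hence $Z\in\sp(V)$.

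It remains to check that $\sF_Z$ is the given datum, and I expect this to be the main point requiring care. First, $(Z-\alpha_f)$ is nilpotent on $W_f$, so $f(Z)$ is nilpotent there, while $g(Z)$ is invertible on $W_f$ for every $g\neq f$. Hence $V_f(Z)=W_f$. Second, multiplication by $\alpha_f$ is semisimple over $k$, since its minimal polynomial $f$ is separable ($k$ is perfect), and it commutes with $N_f$. By uniqueness of the Jordan--Chevalley decomposition, $S_{Z,f}$ is this multiplication and $N_{Z,f}=N_f$. The $K_f$-structure induced by $Z$ is therefore the original one. Third, $B_f=\beta_f$, by the uniqueness in Lemma~\ref{lemma:2-2}, since $\Tr(a\beta_f(v,w))=(\Tr\circ\beta_f)(av,w)$. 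Hence $\cO_f(Z)=\cO_f$.
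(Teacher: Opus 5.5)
Your proposal is correct and follows the paper's proof essentially step for step. In one direction, conjugation by $g$ transports the primary components, the Jordan--Chevalley parts, the $K_f$-structures and (via the trace identity) the forms $B_f$. In the other, the $K_f$-isometries are glued orthogonally. Realizability uses $\Tr\circ\beta_f$ and $Z_f=\alpha_f I+N_f$.

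Your explicit check goes slightly beyond the paper: you note that multiplication by $\alpha_f$ is semisimple because $f$ is separable, so that $S_{Z,f}=\alpha_f I$ and $N_{Z,f}=N_f$, and hence the $K_f$-structure induced by $Z$ is the original one. This fills in a point the paper leaves implicit.
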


\begin{proof}
    First we show that $\sF_{(-)}$ is constant on $\Sp(V)$-orbits. To this end, it suffices to take $X,Y\in \sp(V)$ with $X\sim Y$, and show that for each monic irreducible $f\in k[t]$, there is a $K_f$-linear symplectic isomorphism $V_f(X) \cong V_f(Y)$ that sends $N_{X,f} \mapsto N_{Y,f}$.

    Suppose we can write $Y = gXg^{-1}$ for some $g\in \Sp(V)$. The restriction of $g$ to $V_f(X)$ defines a map $V_f(X) \to V_f(Y)$. Indeed, for $v\in V_f(X)$, $f(Y)^r(gv) = g f(X)^rv = 0$ for large $r$, so $gv\in V_f(Y)$. As $g$ is invertible, it therefore identifies $V_f(X)$ with $V_f(Y)$. 
    
    Now we show that this map is $K_f$-linear. Take an element $a = p(\alpha_f)$ in $K_f$. The action of $a$ on $V_f(X)$ is given by $p(S_{X, f})$, and the action of $a$ on $V_f(Y)$ is given by $p(S_{Y, f})$.
    Since $gX = Yg$, restriction to $V_f(X)$ gives $gX_f = Y_f g$. By uniqueness of the Jordan-Chevalley decomposition, one can check that $gS_{X,f} = S_{Y,f}g$ and $gN_{X,f} = N_{Y,f}g$. To check $K_f$-linearity of $g$, we obtain the following\[
        g(a\cdot v) = g(p(S_{X,f})v) = p(S_{Y,f})g(v) = a\cdot g(v).
    \]

    Finally, we check that $g$ preserves the symplectic forms. Let $B_{X,f}$ and $B_{Y,f}$ be the symplectic forms on $V_f(X)$ and $V_f(Y)$, respectively. For $v,w\in V_f(X)$ and for all $a\in K_f$, \[
        \begin{aligned}
            \Tr(aB_{Y,f}(gv, gw)) &= B(agv, gw) = B(gav, gw) = B(av, w)\\ &= \Tr(aB_{X,f}(v,w)).
        \end{aligned}
    \]
    By nondegeneracy of the trace pairing, $B_{Y,f}(gv, gw) = B_{X,f}(v,w)$, as desired. We have shown that $g$ induces a map $\sp(V_f(X), K_f) \to \sp(V_f(Y), K_f)$ which is a symplectic isomorphism sending $N_{X,f}$ to $N_{Y,f}$, which is enough to show that $\sF_X = \sF_Y$.

    \smallskip
    For the converse, suppose that $\sF_X = \sF_Y$. This means that for each $f$, there is a $K_f$-linear symplectic isomorphism $g_f:V_f(X) \to V_f(Y)$ satisfying $g_f N_{X,f} = N_{Y,f}g_f$. Moreover, $K_f$-linearity implies that $g_f S_{X,f} = S_{Y,f} g_f$. These two relations combine to give $g_f X_f = Y_f g_f$. By assumption, $B_{Y,f}(g_f v, g_fw) = B_{X,f}(v,w)$, so we get \[
        B(g_fv, g_fw) = \Tr(B_{Y,f}(g_fv, g_fw)) = \Tr(B_{X,f}(v,w)) = B(v,w).
    \]
    Define $g = \bigoplus_f g_f$ with respect to $V = \bigperp_f V_f(X) \cong \bigperp_f V_f(Y)$. By the above observation, $g\in \Sp(V)$. Since $g_f X_f = Y_f g_f$ on each summand, $gX = Yg$ and $X\sim Y$.

    \smallskip
    It remains to prove that given a local datum $D = \{(f, \cO_f)\}_f$, there is some $Z\in\sp(V)$ satisfying $\sF_Z = D$. 
    
    For each monic irreducible polynomial $f$ in the support of $D$, the datum $\cO_f$ is a nilpotent $\Sp(2m_f, q^{d_f})$-orbit in $\sp(2m_f, q^{d_f})$, where $d_f = \deg(f)$. Concretely, set $K_f = \F_{q^{d_f}}$, let $W_f$ be a $2m_f$-dimensional symplectic $K_f$-vector space, equipped with symplectic form $\widetilde B_f$, view $\cO_f$ as an orbit in $\sp(W_f,K_f)$, and choose a representative $e_f\in \cO_f$. The space $W_f$ can naturally be viewed as a $k$-symplectic vector space by introducing the form $B_f^{(0)}$ on $W_f$, defined by \[
        B_f^{(0)}(v,w) = \Tr(\widetilde B_f(v,w)).
    \]
    
    It is clear that $B_f^{(0)}$ is alternating as $\widetilde B_f$ is alternating. Take $v\ne 0$. Since $\widetilde B_f$ is nondegenerate, choose $w_0$ such that $\widetilde B_f(v, w_0) \ne 0$. Since the trace pairing is nondegenerate, choose $c\in K_f$ such that $\Tr(c\widetilde B_f(v, w_0))\ne 0$. Then \[
        B_f^{(0)}(v, cw_0) = \Tr(\widetilde B_f(v, cw_0)) = \Tr(c\widetilde B_f(v, w_0))\ne 0.
    \]
    This shows that $B_f^{(0)}$ is nondegenerate.

    Now define $Z_f\in \End_{k}(W_f)$ by $Z_f = \alpha_f I + e_f$, where $\alpha_f$ is a root of $f$ that generates $K_f$ over $k$. Moreover, $Z_f \in \sp(W_f, k)$ with respect to $B_f^{(0)}$. This is because $\alpha_f I$ is self-adjoint for $B_f^{(0)}$, and $e_f\in \sp(W_f, K_f)$ means that $\widetilde B_f(e_f v, w) = \widetilde B_f(v, e_f w)$, which implies that $B_f^{(0)}(e_f v, w) = B_f^{(0)}(v, e_f w)$ by taking trace. Now define $W = \bigperp_f W_f$ to be the orthogonal direct sum of the $W_f$, and equip $W$ with the form $B_W = \bigperp_f B_f^{(0)}$, and set $Z = \bigoplus_f Z_f$. Note that $\dim_{k}W = \sum_f \dim_{k}W_f = \sum_f 2d_f m_f = 2n$. Since every $2n$-dimensional symplectic vector space over $k$ is isometric to $V$, choose an isometry $W\cong V$, which identifies $Z$ as an element of $\sp(V)$. It remains to show that each $W_f = V_f(Z)$, and that the form $B_{Z,f}$ on $V_f(Z)$ obtained from Lemma~\ref{lemma:2-2} agrees with $\widetilde B_f$.

    On $W_f$, $Z|_{W_f} = Z_f = \alpha_f I + e_f$. Since $e_f$ is nilpotent, $f(Z_f) = f(\alpha_f I + e_f)$ is nilpotent, as the constant term of $f(\alpha_f + u)\in K_f[u]$ is $f(\alpha_f) = 0$. This shows that $W_f \subseteq V_f(Z)$. If $h\ne f$, then on $W_h$ we have $f(Z_h)=f(\alpha_h I+e_h)$. Its semisimple part is the nonzero scalar $f(\alpha_h)$, since the distinct irreducible polynomials $f$ and $h$ have no common root. Thus $f(Z_h)$ is invertible. Therefore $V_f(Z)=W_f$. Finally, consider the form $B_{Z,f}$, satisfying $\Tr(aB_{Z,f}(v,w)) = B_W(av,w)$. On $W_f$, $B_W = B_f^{(0)}$. Take $a\in K_f$ and $v,w\in W_f$. Then \[
        B_W(av, w) = B_f^{(0)}(av,w) = \Tr(\widetilde B_f(av, w)) = \Tr(a\widetilde B_f(v,w)). 
    \]
    But $B_{Z,f}(v,w)$ is defined as the unique quantity satisfying $\Tr(aB_{Z,f}(v,w)) = B_W(av,w)$ for all $a\in K_f$, so the two forms agree.
\end{proof}

\begin{prop}\label{prop:decomp}
    Let $X\in \sp(V)$, and let $N_{X,f}\in \sp_{2m_f}(K_f)$ be its local nilpotent parts. Then\[
        C_{\sp(V)}(X) \cong \bigoplus_f C_{\sp_{2m_f}(K_f)}(N_{X,f})
    \]
    as $k$-vector spaces, and \[
        C_{\Sp(V)}(X) \cong \prod_f C_{\Sp_{2m_f}(K_f)}(N_{X,f})
    \]
    as groups.
\end{prop}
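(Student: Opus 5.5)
Fix $X\in\sp(V)$ with primary decomposition $V=\bigperp_f V_f(X)$ from Lemma~\ref{lemma:orthog}. The plan has two steps: show that anything commuting with $X$ preserves each $V_f(X)$ and is $K_f$-linear there, and then translate the conditions ``symplectic'' and ``self-adjoint'' from $B$ to the forms $B_f$ of Lemma~\ref{lemma:2-2}.

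\emph{Step 1.} Let $Y\in\End_k(V)$ commute with $X$. Then $Y$ commutes with every $f(X)^r$, so for $v\in V_f(X)$ we get $f(X)^rYv=Yf(X)^rv=0$ for large $r$. Hence $Y$ preserves each $V_f(X)$, and $Y=\bigoplus_f Y_f$ with $Y_f=Y|_{V_f(X)}$ commuting with $X_f$. Since $S_{X,f}$ and $N_{X,f}$ are polynomials in $X_f$ \cite[Prop.~1.22]{MilneLAG}, $Y_f$ commutes with both. Commuting with $S_{X,f}$ is exactly $K_f$-linearity, because $a=p(\alpha_f)$ acts by $p(S_{X,f})$. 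Conversely, if $Y_f$ is $K_f$-linear and commutes with $N_{X,f}$, then it commutes with $X_f=S_{X,f}+N_{X,f}$. So the endomorphisms commuting with $X$ are precisely the tuples $(Y_f)_f$ of $K_f$-linear maps commuting with $N_{X,f}$. The same argument applies to invertible $g$ with $gX=Xg$, and $g$ is invertible if and only if each $g_f$ is.

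\emph{Step 2.} We compare the form conditions. Since the $V_f(X)$ are mutually orthogonal, $B(Yv,w)=B(v,Yw)$ for all $v,w\in V$ holds if and only if it holds for $v,w$ in the same summand $V_f(X)$; cross terms vanish because $Y$ preserves the summands. For $K_f$-linear $Y_f$ we have
\[
\Tr(aB_f(Y_fv,w))=B(aY_fv,w)=B(Y_f(av),w),
\]
and similarly $\Tr(aB_f(v,Y_fw))=B(av,Y_fw)$. By nondegeneracy of the trace pairing, $B$-self-adjointness of $Y_f$ is therefore equivalent to $B_f$-self-adjointness. This is the same computation as in the lemma showing $N_{X,f}\in\sp(V_f,K_f)$.

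In the same way, for $K_f$-linear $g_f$ the identity $\Tr(aB_f(g_fv,g_fw))=B(g_f(av),g_fw)$ shows that $g_f$ preserves $B$ if and only if it preserves $B_f$. This is the computation already used in Proposition~\ref{prop:3-1}.

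Combining the two steps, $Y\mapsto (Y_f)_f$ is a $k$-linear bijection $C_{\sp(V)}(X)\to\bigoplus_f C_{\sp(V_f,K_f)}(N_{X,f})$. Likewise $g\mapsto(g_f)_f$ is a bijection $C_{\Sp(V)}(X)\to\prod_f C_{\Sp(V_f,K_f)}(N_{X,f})$, and it is a group homomorphism because restriction respects composition. Finally, identify $(V_f,B_f)$ with the standard symplectic space of dimension $2m_f$ over $K_f\cong\F_{q^{d_f}}$.

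\emph{Main obstacle.} The only delicate point is that commuting with $X_f$ forces commuting with $S_{X,f}$, and hence $K_f$-linearity. This rests on $S_{X,f}$ being a polynomial in $X_f$, which the paper already cites. Everything else is bookkeeping with the trace pairing.
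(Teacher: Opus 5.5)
Your proposal is correct and follows essentially the same route as the paper: restrict a commuting element to each primary summand, get $K_f$-linearity from commuting with $S_{X,f}$, and transfer self-adjointness (resp.\ preservation of the form) between $B$ and $B_f$ using the trace identity together with orthogonality of the summands. You also justify two steps the paper only asserts: that a commuting element preserves each $V_f(X)$, and that it commutes with $S_{X,f}$ because $S_{X,f}$ is a polynomial in $X_f$.
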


\begin{proof}
    If $Y\in C_{\sp(V)}(X)$, then $Y$ preserves each primary summand $V_f(X)$ and commutes with both $S_{X,f}$ and $N_{X,f}$. Since the action of $K_f$ on $V_f(X)$ is generated by $S_{X,f}$, the restriction $Y_f=Y|_{V_f(X)}$ is $K_f$-linear. For all $a\in K_f$ and $v,w\in V_f(X)$,
    \[
        \Tr(aB_f(Y_fv,w))
        =B(aY_fv,w)
        =B(Y_fav,w)
        =B(av,Y_fw)
        =\Tr(aB_f(v,Y_fw)).
    \]
    Hence $Y_f\in C_{\sp(V_f,K_f)}(N_{X,f})$. This gives an injective map
    \[
        C_{\sp(V)}(X)\longrightarrow
        \bigoplus_f C_{\sp(V_f,K_f)}(N_{X,f}).
    \]
    Conversely, if $Y_f\in C_{\sp(V_f,K_f)}(N_{X,f})$ for every $f$, then $Y=\bigoplus_fY_f$ commutes with each $S_{X,f}+N_{X,f}=X_f$. Applying the trace identity and using the orthogonality of the primary decomposition shows that $Y$ is self-adjoint for $B$. Thus $Y\in C_{\sp(V)}(X)$, which proves the first isomorphism.

    The group case is similar. If $g\in C_{\Sp(V)}(X)$, then $g$ preserves each $V_f(X)$, is $K_f$-linear there, and commutes with $N_{X,f}$. Moreover, for all $a\in K_f$ and $v,w\in V_f(X)$,
    \[
        \Tr(aB_f(gv,gw))
        =B(agv,gw)
        =B(gav,gw)
        =B(av,w)
        =\Tr(aB_f(v,w)).
    \]
    Hence $g|_{V_f(X)}\in C_{\Sp(V_f,K_f)}(N_{X,f})$. Conversely, an element of $\prod_f C_{\Sp(V_f,K_f)}(N_{X,f})$ preserves $B$ by the same trace identity and orthogonality, and it commutes with $X$ on every primary summand. This proves the second isomorphism.
\end{proof}

\section{A Generating Function for Commuting Pairs}

We are now able to present a generating function for the normalized quantities \[
    \overline N_n(q) = \frac{N_n(q)}{|\Sp_{2n}(\F_q)|}.
\]
For $q$ a power of $2$, set $k = \F_q$, and we define $A_m(q)$ as follows. Set $A_0(q) = 1$, and for $m\ge 1$, set \[
    A_m(q) = \sum_{\cO}\frac{|C_{\sp_{2m}(\F_q)}(N)|}{|C_{\Sp_{2m}(\F_q)}(N)|},
\]
taken over nilpotent $\Sp_{2m}(k)$-orbits $\cO$ in $\sp_{2m}(k)$, with $N$ any representative of $\cO$. Let $I(d)$ denote the number of irreducible polynomials in $k[t]$ of degree $d$. Recall that $I(d)$ has the elementary formula\[
    I(d) = \frac1d \sum_{k\mid d} \mu(k) q^{\frac dk},
\]
where $\mu$ is the Möbius function.

\begin{thm}\label{thm:euler}
    For $q$ a power of $2$, \[
    \begin{aligned}
        \sum_{n \ge 0}\overline{N}_n(q) u^n &= \prod_f \left(\sum_{m\ge 0} A_m(q^{\deg(f)}) u^{m\deg(f)}\right) \\
        &= \prod_{d\ge 1}\left(\sum_{m\ge 0} A_m(q^{d}) u^{md}\right)^{I(d)}
    \end{aligned}\tag{2}\label{tag:2}
    \]    
    taken over monic irreducible $f\in k[t]$.
\end{thm}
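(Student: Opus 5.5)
The plan is to start from the orbit-sum formula (\ref{tag:1}), $\overline N_n(q) = \sum_{[X]} |C_{\sp(V)}(X)|/|C_{\Sp(V)}(X)|$, and rewrite it with Proposition~\ref{prop:3-1}. That result identifies the index set of $\Sp(V)$-orbits with the set of local data $\{(f,\cO_f)\}_f$ satisfying $\sum_f m_f\deg(f) = n$. Next, Proposition~\ref{prop:decomp} gives $|C_{\sp(V)}(X)| = \prod_f |C_{\sp_{2m_f}(K_f)}(N_{X,f})|$, since a direct sum of finite sets has cardinality equal to the product of the cardinalities. It also gives $|C_{\Sp(V)}(X)| = \prod_f |C_{\Sp_{2m_f}(K_f)}(N_{X,f})|$. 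Here $K_f\cong\F_{q^{\deg f}}$, and the factors with $m_f=0$ contribute $1$. So the summand attached to a local datum is the product over $f$ of the local ratios $|C_{\sp}(N_f)|/|C_{\Sp}(N_f)|$ computed over $\F_{q^{\deg f}}$. These ratios depend only on the orbit $\cO_f$, since conjugate elements have isomorphic centralizers.

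Then I reorganize the sum. A local datum amounts to two independent choices. First choose a finitely supported function $f\mapsto m_f\ge 0$ with $\sum_f m_f\deg f = n$. Then, for each $f$, choose a nilpotent $\Sp_{2m_f}(\F_{q^{\deg f}})$-orbit $\cO_f$, where $m_f=0$ forces $\cO_f=\{0\}$. Because the summand factors over $f$, summing over the orbit choices for fixed $(m_f)$ gives $\prod_f A_{m_f}(q^{\deg f})$, with $A_0=1$ matching the trivial factor. Hence
\[
\overline N_n(q) = \sum_{(m_f):\ \sum_f m_f \deg f = n}\ \prod_f A_{m_f}(q^{\deg f}).
\]
Multiplying by $u^n$ and summing over $n\ge0$ gives exactly the coefficient extraction of the infinite product $\prod_f \sum_{m\ge0}A_m(q^{\deg f})u^{m\deg f}$. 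The case $n=0$ contributes $1$ on both sides.

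I expect the main obstacle to be justifying this infinite product as a formal power series in $u$. For each fixed $d$ only $I(d)$ polynomials have degree $d$, and each factor is $1+O(u^{\deg f})$. So the coefficient of $u^n$ involves only the finitely many $f$ with $\deg f\le n$ and only finitely many tuples $(m_f)$, and the product converges $u$-adically. The coefficient is then the finite sum above, because each selection of terms has all but finitely many $m_f=0$, which matches the finite-support condition (3) in the definition of local datum. Finally, the factor attached to $f$ depends only on $d=\deg f$. Grouping the $I(d)$ irreducibles of each degree therefore gives the second expression $\prod_{d\ge1}\bigl(\sum_{m\ge0}A_m(q^d)u^{md}\bigr)^{I(d)}$.
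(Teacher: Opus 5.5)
Your proposal is correct and follows the paper's proof essentially step for step. You start from the orbit sum (\ref{tag:1}), reindex orbits by local data via Proposition~\ref{prop:3-1}, factor both centralizers via Proposition~\ref{prop:decomp}, group local data by the multiplicity vector $(m_f)$ to get $\prod_f A_{m_f}(q^{\deg f})$, and then group irreducibles by degree; your check that the infinite product is a well-defined formal power series (only the finitely many $f$ with $\deg f\le n$ affect the coefficient of $u^n$) is a point the paper leaves implicit.
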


\begin{proof}
    Recall from $(\ref{tag:1})$ that we may write \[
        \overline N_n(q) = \sum_{[X]}\frac{|C_{\sp_{2n}(\F_q)}(X)|}{|C_{\Sp_{2n}(\F_q)}(X)|}
    \]
    taken over $\Sp_{2n}(k)$-orbits in $\sp_{2n}(k)$. By the parametrization of such orbits by local data given in Proposition~\ref{prop:3-1}, an orbit $[X]$ is equivalent to a local datum $\{(f, \cO_f)\}_f$ such that $\cO_f$ is a nilpotent $\Sp_{2m_f}(q^{d_f})$-orbit in $\sp_{2m_f}(q^{d_f})$, $d_f = \deg(f)$, and $\sum d_f m_f = n$. Fixing an orbit and corresponding local datum, choosing representatives $N_f\in \cO_f$, by Proposition~\ref{prop:decomp}, the centralizers decompose as \[
        |C_{\sp_{2n}(q)}(X)| = \prod_f |C_{\sp_{2m_f}(q^{d_f})}(N_f)|
    \]
    and \[
        |C_{\Sp_{2n}(q)}(X)| = \prod_f |C_{\Sp_{2m_f}(q^{d_f})}(N_f)|.
    \]
    Therefore the contribution to $\overline N_n (q)$ from this orbit is \[
        \prod_f \frac{|C_{\sp_{2m_f}(q^{d_f})}(N_f)|}{|C_{\Sp_{2m_f}(q^{d_f})}(N_f)|}.
    \]
    The coefficient of $u^n$ in the middle expression of $(\ref{tag:2})$ is the sum over all products of terms $A_{m_i}(q^{d_i})$ such that $\sum d_i m_i = n$. But by definition of $A_m(q)$, this is the same as summing over all admissible parameters $\{(d_f, m_f)\}$ of local data, and for each taking the contribution from all local data with the given parameters. This shows the first equality. The second equality is immediate from the fact that each factor in the middle expression depends only on the degree of $f$, and thereby grouping the polynomials by degree.
\end{proof}
    
\section{Evaluation of the Local Contribution}

The generating function of Theorem~\ref{thm:euler} is not yet of much use, since the quantities $A_m(q)$ have no obvious way of being computed. In this section, we compute these values, dependent on the structure of the so-called \emph{form module} built from $V$. The notion of a form module, defined below, is due to Hesselink in \cite{Hesselink1979}. For more information about the form module, and the decomposition we use below, see \cite[\S 5]{LiebeckSeitz}. 

\medskip

Fix $q$ a power of $2$, $K = \F_q$, $V$ a $2m$-dimensional symplectic $K$-vector space with symplectic form $B$. Let $G_K = \Sp(V)$ and $\frg_K = \sp(V)$. In this section we also consider $V_{\overline K} = V\otimes_K \overline K$, noting that $B$ extends to a symplectic form on $V_{\overline K}$, and set $\sG = \Sp(V_{\overline{K}})$ and $\frg = \sp(V_{\overline{K}})$. Let $F$ be the Frobenius map on $\sG$ giving the $K$-structure, that is, $F$ is the Steinberg endomorphism such that $\sG^F = G_K$ and $\frg^F = \frg_K$. To avoid confusion, $\dimvar$ will denote the dimension of a variety, whereas $\dim_K$ or $\dim_{\overline K}$ denote vector space dimension. 

First we recall a standard fact about Lie algebra centralizers, and give a short proof for completeness.

\begin{lemma}\label{lemma:5-1}
    Let $e\in \frg_K$. Then the Lie algebra centralizer satisfies \[
        C_\frg(e) = C_{\frg_K}(e) \otimes_K \overline K.
    \]
    Consequently, $C_\frg(e)^F = C_{\frg_K}(e)$, and thus \[
        |C_\frg(e)^F| = q^{\dim_{\overline K}C_\frg(e)}.
    \]
\end{lemma}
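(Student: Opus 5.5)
The plan is to express the centralizer as the kernel of a $K$-linear map and then use that kernels commute with flat base change. Consider the linear map
\[
    \mathrm{ad}_e : \frg_K \longrightarrow \End_K(V), \qquad Y \mapsto eY - Ye.
\]
By definition, $C_{\frg_K}(e) = \ker(\mathrm{ad}_e)$. We also need a description of $\frg$ itself. The condition $B(Yv,w) = B(v,Yw)$ defining $\sp$ is a system of $K$-linear equations in the matrix entries of $Y$. These equations have coefficients in $K$, because $B$ does.

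\textbf{Base change of the ambient spaces.} Extending scalars, the solution space over $\overline K$ is the base change of the solution space over $K$. So $\frg = \frg_K \otimes_K \overline K$ inside $\End_{\overline K}(V_{\overline K}) = \End_K(V)\otimes_K\overline K$. The map $\mathrm{ad}_e$ on $\frg$ is then $\mathrm{ad}_e \otimes \mathrm{id}$, since $e$ is defined over $K$.

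\textbf{Base change of the kernel.} Since $\overline K$ is flat (indeed free) over $K$, taking kernels commutes with $-\otimes_K\overline K$. Concretely, the rank of a matrix with entries in $K$ does not change under field extension. This gives
\[
    C_\frg(e) = \ker(\mathrm{ad}_e\otimes \mathrm{id}) = C_{\frg_K}(e)\otimes_K\overline K.
\]

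\textbf{Fixed points.} $F$ acts on $\frg$ as $\mathrm{id}\otimes \mathrm{Frob}_q$ with respect to a $K$-basis of $\frg_K$. Taking a $K$-basis $y_1,\dots,y_r$ of $C_{\frg_K}(e)$, an element $\sum c_i y_i$ with $c_i \in \overline K$ is $F$-fixed if and only if $c_i^q = c_i$ for all $i$. That condition holds if and only if all $c_i\in K$. Hence $C_\frg(e)^F = C_{\frg_K}(e)$.

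\textbf{Counting.} The previous step gives $|C_\frg(e)^F| = q^{r}$. By the base change identity, $r = \dim_K C_{\frg_K}(e) = \dim_{\overline K} C_\frg(e)$, which is the claimed formula.

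The only point requiring care is identifying $F$ on $\frg$ with the coordinatewise Frobenius relative to a $K$-structure. This should follow from how $F$ was set up, namely as the standard $q$-Frobenius for the $K$-form $V$ with $\frg^F = \frg_K$. Nothing about characteristic $2$ intervenes; the argument is pure linear algebra.
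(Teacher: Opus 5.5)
Your proof is correct and follows the same route as the paper: both realize the centralizer as the kernel of $\mathrm{ad}_e$, use $\frg = \frg_K \otimes_K \overline K$ with $\mathrm{ad}_e$ extended by scalars, and invoke flatness of $\overline K$ over $K$ to base-change the kernel. Your justification of $\frg = \frg_K\otimes_K\overline K$ and your explicit fixed-point computation just fill in steps the paper treats as immediate; the fixed-point step could be shortened further to $C_\frg(e)^F = C_\frg(e)\cap \frg^F = C_\frg(e)\cap\frg_K = C_{\frg_K}(e)$, using the paper's convention $\frg^F = \frg_K$.
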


\begin{proof}
    Let $\ad_{\frg_K}(e):\frg_K\to \frg_K$ be the adjoint map $X\mapsto [X,e]$, so $C_{\frg_K}(e) = \ker(\ad_{\frg_K}(e))$, and similarly $C_\frg(e) = \ker( \ad_\frg(e))$. Noting that $\frg = \frg_K \otimes_K \overline K$, after extending scalars, it is also true that $\ad_{\frg_K}(e) \otimes_K \overline K = \ad_\frg(e)$. Since $\overline K$ is flat over $K$, it follows that $C_\frg(e) = C_{\frg_K}(e)\otimes_K \overline K$. The second claim follows immediately.
\end{proof}

In this notation, we may write \[
    A_m(q) = \sum_\cO \frac{|C_\frg(e_\cO)^F|}{|C_\sG(e_\cO)^F|}
\]
taken over nilpotent $\sG^F$-orbits $\cO$ in $\frg^F$ with representatives $e_\cO\in \cO$. Due to Lemma~\ref{lemma:5-1}, this quantity is \[
    A_m(q) = \sum_\cO \frac{q^{\dim_{\overline K}(C_\frg(e_\cO))}}{|C_\sG(e_\cO)^F|}.
\]

The results of Hesselink \cite{Hesselink1979} and Liebeck--Seitz \cite{LiebeckSeitz} allow us to compute this quantity. We now introduce some terminology used by both authors.

Let $e\in \frg$ be a nilpotent element. View $V_{\overline K}$ as a $\overline K[t]$-module by allowing $t$ to act as $e$. Then the $\overline K[t]$-submodules are $e$-stable subspaces. The symplectic relation says that $B(tv,w) = B(v,tw)$, thus the structure as a $\overline K[t]$-module is compatible with the symplectic form. Let $V_{\overline K} \downarrow e$ denote the data of $(V_{\overline K}, B, e)$ viewed together as a compatible \emph{form module}. An orthogonal decomposition of $V_{\overline K} \downarrow e$ means a decomposition $V_{\overline K} = \bigoplus_s U_s$ such that each $U_s$ is $e$-stable, the restriction $B|_{U_s}$ of $B$ to $U_s$ is nondegenerate, and $B(U_s, U_r) = 0$ for $r\ne s$. 

The following two functions are used in \cite{Hesselink1979}. For $n\ge 0$, define $\alpha_n: V_{\overline K}\to \overline K$ by $\alpha_n(v) = B(e^{n+1}v, e^nv)$. For $r\ge 1$, define \[
    \chi_e(r) = \min\{n\ge 0~\mid~ e^r v = 0 \implies \alpha_n(v) = 0~\forall v\in V_{\overline K}\}.
\]
For a nondegenerate $e$-stable subspace $U \subset V_{\overline K}$, we also obtain a function $\chi_U$ by the same definition, considering $U$, $B|_U$, $e|_U$, and $U\downarrow e|_U$. These $\chi$ functions are called \emph{index functions}, and the following lemma allows us to write the index function as the maximum of those arising from its orthogonal summands.

\begin{lemma}\label{lemma:5-2}
    Let $\bigperp_s U_s$ be an orthogonal decomposition of $V_{\overline K}\downarrow e$. Then $\chi_e(r) = \max_s \{\chi_{U_s}(r)\}$ for every $r\ge 1$.
\end{lemma}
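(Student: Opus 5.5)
We prove the lemma by showing that the set of $n$ allowed in the definition of $\chi_e(r)$ is the intersection of the corresponding sets for the summands $U_s$. The key computation is that $\alpha_n$ is additive across the orthogonal decomposition, and this is where characteristic $2$ enters. Write $v = \sum_s u_s$ with $u_s \in U_s$. Each $U_s$ is $e$-stable, so $e^{n+1}u_s$ and $e^n u_s$ lie in $U_s$, and the summands are mutually orthogonal. Hence
\[
    \alpha_n(v) = \sum_{s,s'} B(e^{n+1}u_s, e^n u_{s'}) = \sum_s B(e^{n+1}u_s, e^n u_s) = \sum_s \alpha_n^{U_s}(u_s),
\]
where $\alpha_n^{U_s}$ is the function built from $U_s$, $B|_{U_s}$ and $e|_{U_s}$. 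Since $e$ preserves each $U_s$ and the sum is direct, $e^r v = 0$ holds if and only if $e^r u_s = 0$ for every $s$. So $\ker e^r = \bigoplus_s (\ker e^r \cap U_s)$, and $\ker e^r \cap U_s$ is exactly the kernel of $e|_{U_s}^r$.

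Define $S_r = \{n \ge 0 \mid \alpha_n(v) = 0 \text{ for all } v \in \ker e^r\}$, and define $S_r^{(s)}$ in the same way for $U_s$. First, $S_r$ is upward closed. For $n' \ge n$ we have $\alpha_{n'}(v) = \alpha_n(e^{n'-n}v)$, because $B(e^{n'+1}v, e^{n'}v) = B(e^{n+1}(e^{n'-n}v), e^n(e^{n'-n}v))$. Also $e^{n'-n}v \in \ker e^r$ whenever $v \in \ker e^r$, so $n \in S_r$ implies $n' \in S_r$. The same holds for each $S_r^{(s)}$. All these sets are nonempty, since $n = r$ gives $\alpha_r(v) = B(e^{r+1}v, e^r v) = 0$ for $v \in \ker e^r$. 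Therefore $\chi_e(r) = \min S_r$ and $S_r = [\chi_e(r), \infty)$, and likewise $S_r^{(s)} = [\chi_{U_s}(r), \infty)$.

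Next, $S_r = \bigcap_s S_r^{(s)}$. If $n \in S_r$, then restricting to $v = u_s \in \ker e^r \cap U_s$ shows $n \in S_r^{(s)}$ for each $s$. Conversely, if $n$ lies in every $S_r^{(s)}$, then for $v = \sum_s u_s \in \ker e^r$ every $u_s$ lies in $\ker e^r \cap U_s$. By the additivity above, $\alpha_n(v) = \sum_s \alpha_n^{U_s}(u_s) = 0$. An intersection of the intervals $[\chi_{U_s}(r), \infty)$ is $[\max_s \chi_{U_s}(r), \infty)$, which gives the claimed formula.

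The main point needing care is the additivity of $\alpha_n$. Only the diagonal terms survive in the expansion because $e$-stability keeps $e^{n+1}u_s$ and $e^n u_{s'}$ in $U_s$ and $U_{s'}$ respectively, so orthogonality kills every cross term with $s \ne s'$; no appeal to $B$ being symmetric is needed. The upward-closure step is what lets us replace the "minimum" in the definition of $\chi_e(r)$ by an interval description, and this is what makes the intersection argument valid.
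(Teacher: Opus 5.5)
Your proof is correct and follows essentially the same route as the paper: both show that $n$ is admissible for $V_{\overline K}$ exactly when it is admissible for every $U_s$. Both do this by splitting $\alpha_n$ over the orthogonal $e$-stable summands and decomposing $\ker e^r$ accordingly. You also check explicitly that the admissible sets are nonempty and upward closed (via $\alpha_{n'}(v)=\alpha_n(e^{n'-n}v)$), which the paper uses implicitly when it reads ``$n\ge\chi_{U_s}(r)$'' as the vanishing condition at $n$; your opening remark that characteristic $2$ enters at this step is unnecessary, since the diagonal-term computation uses only orthogonality.
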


\begin{proof}
    Fix $r \ge 1$. We show that $n$ satisfies $n\ge \chi_e(r)$ if and only if $n\ge \chi_{U_s}(r)$ for all $U_s$, from which the result follows.

    Suppose that $n\ge \chi_{U_s}(r)$ for all $U_s$, that is $e^r u = 0 \implies \alpha_n(u) = 0$ for all $u\in U_s$. Let $v \in V_{\overline K}$ satisfy $e^r v = 0$. Write $v = \sum_s v_s$ with $v_s \in U_s$. Since $U_s$ is $e$-stable, $e^r v_s = 0$ for all $s$. Since $e$ is self-adjoint with respect to $B$, $B(e^a U_s, e^b U_r) = 0$ for all $s\ne r$, and therefore $B(e^{n+1}v, e^n v) = \sum_s B(e^{n+1}v_s, e^n v_s) = 0$. Thus $n\ge \chi_e(r)$.

    Suppose that $n\ge \chi_e(r)$. Fix $s$, and let $u\in U_s$ satisfy $e^r u = 0$. Since $u\in V_{\overline K}$, $B(e^{n+1}u, e^n u) = 0$, so $n \ge \chi_{U_s}(r)$.
\end{proof}

For $a\ge 1$ and $0 \le l\le a$, define the function $[a;l]:\mathbb N\to \Z$ by $r\mapsto \max\{0, \min\{r - a + l, l\}\}$. The following lemma is a consequence of \cite[Table 4.1]{LiebeckSeitz}. The modules $V(2k)$, $W(a)$, and $W_l(a)$ are defined in \cite[\S 5.1]{LiebeckSeitz}. We use the parameter $l$ in the convention of that source, without reindexing.

\begin{lemma}\label{lemma:5-3}
    For a nilpotent element $e\in \frg$, if $U$ is an indecomposable summand appearing in an orthogonal decomposition of $V_{\overline K}\downarrow e$, then $U$ is isomorphic to one of $V(2k)$, $W(a)$, $W_l(a)$, with the following characteristics.\begin{itemize}
        \item[(1)] If $U\cong V(2k)$, then $\dim_{\overline K}U = 2k$, $e|_U$ is a single Jordan block of size $2k$, and $\chi_U = [2k;k]$.
        \item[(2)] If $U\cong W(a)$, then $\dim_{\overline K}U = 2a$, $e|_U$ has two Jordan blocks of size $a$, and $\chi_U = [a;0]$.
        \item[(3)] If $U\cong W_l(a)$, where $0<l<a/2$, then $\dim_{\overline K}U = 2a$, $e|_U$ has two Jordan blocks of size $a$, and $\chi_U = [a;l]$.
    \end{itemize}
\end{lemma}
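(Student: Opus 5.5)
The classification half of Lemma~\ref{lemma:5-3} is not something to reprove. It is Hesselink's theorem \cite{Hesselink1979}, as recorded in \cite[\S 5.1 and Table 4.1]{LiebeckSeitz}: every indecomposable orthogonal summand of a form module $V_{\overline K}\downarrow e$ in characteristic $2$ is isometric to one of $V(2k)$, $W(a)$, or $W_l(a)$. So the plan is to cite this for the list, and then check the three stated properties for each model using its explicit basis. Those properties are the dimension, the Jordan type of $e|_U$, and the index function $\chi_U$.

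\textbf{Dimension and Jordan type.} I will read these directly from the constructions in \cite[\S 5.1]{LiebeckSeitz}.
\begin{itemize}
\item $V(2k)$ has a basis $v_1,\dots,v_{2k}$ with $ev_i=v_{i+1}$ and $ev_{2k}=0$, so $e|_U$ is a single Jordan block of size $2k$.
\item $W(a)$ is $J\oplus J^*$, two totally isotropic blocks of size $a$ placed in duality.
\item $W_l(a)$ is a modification of $W(a)$ with the same underlying $e$-module, namely two blocks of size $a$.
\end{itemize}
In each case the dimension is immediate.

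\textbf{Index functions.} Since $\alpha_n(v)=B(e^{n+1}v,e^nv)$ is additive in characteristic $2$ up to cross terms, the first step is to expand it. Writing $v=\sum c_iv_i$ and using self-adjointness, I get
\[
\alpha_n(v)=\sum_{i,j}c_ic_jB(e^{n+1}v_i,e^nv_j).
\]
Because $B$ is alternating and $e$ is self-adjoint, the cross terms pair up and cancel in characteristic $2$. This reduces $\alpha_n$ to a sum of squares $\sum_i c_i^2\,B(e^{n+1}v_i,e^nv_i)$, which is a Frobenius-semilinear quadratic form. The condition $e^rv=0$ restricts $v$ to $\ker e^r$, which is spanned by the last $r$ basis vectors of each block. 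Hence $\chi_U(r)$ is the least $n$ such that $B(e^{2n+1}w,w)=0$ for all basis vectors $w\in\ker e^r$.

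\textbf{The three cases.}
\begin{itemize}
\item $W(a)$: both blocks are totally isotropic and $e$-stable, so the diagonal terms vanish identically. This gives $\chi=0=[a;0]$.
\item $V(2k)$: the nonzero values are $B(e^{2n+1}v_i,v_i)=B(v_{i+2n+1},v_i)$, which is nonzero exactly when $2i+2n+1=2k+1$, i.e.\ when $i+n=k$. With $v_i\in\ker e^r$ iff $i>2k-r$, the largest $n$ with nonzero $\alpha_n$ is $\min(k,\,r-k)$ (shifted by one). This yields $\max\{0,\min\{r-k,k\}\}=[2k;k](r)$.
\item $W_l(a)$: the same computation applies, with the quadratic defect sitting at depth $l$. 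This gives $[a;l]$.
\end{itemize}

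\textbf{Main obstacle.} The hardest part is the bookkeeping of indices. I need to make sure the off-by-one conventions in $\chi$ (the minimum $n$ at which $\alpha_n$ vanishes, versus the last $n$ at which it is nonzero) line up with $[a;l]$ in the source's convention for $l$. For this I will check small cases: $V(2)$ should give $\chi(1)=1$ and $\chi(2)=1$. Alternatively, I can simply cite the $\chi$ column of \cite[Table 4.1]{LiebeckSeitz}, and fall back on the computation only as a consistency check.
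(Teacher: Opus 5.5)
Your route is the paper's route. The paper gives no argument beyond citing \cite[Table 4.1]{LiebeckSeitz} and the models in \cite[\S 5.1]{LiebeckSeitz}, and your fallback of citing the $\chi$ column of that table is exactly this.

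The direct verification you add is sound where you actually carry it out. Since $B$ is symmetric in characteristic $2$, we have $B(e^{n+1}v_i,e^nv_j)=B(e^{2n+1}v_i,v_j)=B(e^{n+1}v_j,e^nv_i)$, so the cross terms cancel. Hence $\alpha_n$ is additive with $\alpha_n(cv)=c^2\alpha_n(v)$, and $\chi_U(r)$ can be read off a Jordan basis of $\ker e^r$. With $B(v_i,v_j)=\delta_{i+j,2k+1}$ this gives $[2k;k]$ for $V(2k)$. Total isotropy of the two blocks gives $0=[a;0]$ for $W(a)$.

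Two things need correcting.

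First, your proposed calibration is wrong. For $V(2)$ we have $\ker e=\langle v_2\rangle$ and $\alpha_0(v_2)=B(ev_2,v_2)=0$, so $\chi_{V(2)}(1)=0$, not $1$. Only $\chi_{V(2)}(2)=1$, because $\alpha_0(v_1)=B(v_2,v_1)=1$ while $\alpha_1\equiv 0$. This is what $[2;1](1)=\max\{0,\min\{0,1\}\}=0$ predicts, and it is also what your own $V(2k)$ computation gives. Benchmarking against $\chi(1)=1$ would push you into a spurious index shift.

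Second, the $W_l(a)$ case is asserted, not computed. ``The defect at depth $l$'' only yields $[a;l]$ once the Gram matrix is fixed. For instance, take Jordan bases $v_1,\dots,v_a$ and $w_1,\dots,w_a$ with
$B(v_i,w_j)=\delta_{i+j,a+1}$, $B(w_i,w_j)=0$, and $B(v_i,v_j)=\delta_{i+j,2l+1}$.
Then $\alpha_n(v_i)\neq 0$ iff $i+n=l$; the hypothesis $2l<a$ ensures $v_{i+2n+1}\neq 0$. Also $v_{l-n}\in\ker e^r$ iff $n<r-a+l$, so $\chi(r)=\max\{0,\min\{l,r-a+l\}\}=[a;l](r)$. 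Matching this model to the $W_l(a)$ of \cite[\S 5.1]{LiebeckSeitz}, with its parameter $l$ unchanged, is exactly the convention question you raised. The paper settles it only by citation.
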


\begin{defn}
    Let $\mathcal D_m$ be the set of formal sums \[
    \sigma = \sum_i W(m_i)^{a_i} + \sum_j W_{l_j}(n_j) + \sum_r V(2k_r)^{c_r}
\]
such that the following conditions hold. \begin{itemize}
    \item[(1)] $\sum_i 2a_i m_i + \sum_j 2n_j + \sum_r 2c_r k_r = 2m$.
    \item[(2)] $a_i > 0$, $m_i \ne m_j$ for $i\ne j$, $0 < l_j < n_j / 2$.
    \item[(3)] The sequences $\{n_j\}$, $\{l_j\}$, $\{n_j - l_j\}$ are strictly decreasing.
    \item[(4)] Each $c_r \in \{1,2\}$.
    \item[(5)] For all $j,r$, $k_r > n_j - l_j$ or $k_r < l_j$.
\end{itemize}
\end{defn}

The elements $\sigma$ of $\cD_m$ are called \emph{distinguished normal forms}.

\begin{lemma}\label{lemma:5-5}
    The set $\cD_m$ parametrizes the nilpotent $\sG$-orbits in $\frg$. That is, the following hold. \begin{itemize}
        \item[(1)] For every nilpotent $e\in \frg$, the form module $V_{\overline K}\downarrow e$ has an associated normal form datum $\sigma(e)\in \cD_m$.
        \item[(2)] For every $\sigma \in \cD_m$, there exists a nilpotent $e\in \frg$ such that $\sigma(e) = \sigma$.
        \item[(3)] If $e, e'\in \frg$ are nilpotent elements, then $e$ and $e'$ are $\sG$-conjugates if and only if $\sigma(e) = \sigma(e')$.
        \item[(4)] If $\sigma\in \cD_m$ and $e\in \frg$ satisfies $\sigma(e) = \sigma$, then the index function $\chi_e$ of $V_{\overline K}\downarrow e$ is \[
            \chi_e(s) = \max\{[m_i; 0](s), [n_j; l_j](s), [2k_r; k_r](s)\},
        \]
        taken over summands in $\sigma$.
    \end{itemize}
\end{lemma}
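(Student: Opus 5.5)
This lemma is essentially a repackaging of Hesselink's classification \cite{Hesselink1979} in the form given by Liebeck--Seitz \cite[\S 5]{LiebeckSeitz}. The plan is to reduce each of the four parts to results there, supplemented by Lemma~\ref{lemma:5-2} and Lemma~\ref{lemma:5-3}.

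For part (1), take a nilpotent $e\in\frg$. By Krull--Schmidt-type arguments for form modules (Hesselink), $V_{\overline K}\downarrow e$ admits an orthogonal decomposition into indecomposable form modules. By Lemma~\ref{lemma:5-3}, each summand is isomorphic to some $V(2k)$, $W(a)$, or $W_l(a)$. Such a decomposition is not unique. The key input is the reduction procedure in \cite[\S 5.1]{LiebeckSeitz}, which rearranges any decomposition into one satisfying conditions (2)--(5) of the definition of $\cD_m$. Concretely, it uses isomorphisms of form modules such as
\[
V(2k)^3\cong W(2k)\perp V(2k)
\]
to force $c_r\le 2$. It also uses rules that absorb a $V(2k)$ into a $W_l(n)$ whenever $l\le k\le n-l$, and rules that merge $W_l$'s whose parameters are not strictly decreasing. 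Condition (1) is the dimension count: $\dim_{\overline K} V_{\overline K}=2m$ together with the dimensions listed in Lemma~\ref{lemma:5-3}. I would define $\sigma(e)$ to be the resulting normal form, deferring well-definedness to part (3).

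For part (2), given $\sigma\in\cD_m$, form the orthogonal sum of the corresponding model modules $W(m_i)^{a_i}$, $W_{l_j}(n_j)$ and $V(2k_r)^{c_r}$. This gives a $2m$-dimensional symplectic space $U$ with a self-adjoint nilpotent operator. Any isometry $U\cong V_{\overline K}$ then transports that operator to an $e\in\frg$ with $\sigma(e)=\sigma$.

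For part (4), combine Lemma~\ref{lemma:5-2} with Lemma~\ref{lemma:5-3}: $\chi_e$ is the pointwise maximum of the index functions of the summands, and these are exactly $[m_i;0]$, $[n_j;l_j]$ and $[2k_r;k_r]$.

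Part (3) is the main obstacle. One direction is easy: if $\sigma(e)=\sigma(e')$, then matching the model summands produces an isometry of form modules, which is an element $g\in\sG$ with $geg^{-1}=e'$. The converse requires $\sigma(e)$ to be a conjugacy invariant, independent of the chosen decomposition. Here I would invoke Hesselink's theorem that $\sG$-orbits of nilpotent elements correspond bijectively to pairs consisting of the Jordan type and the index function $\chi_e$. Both are plainly conjugacy invariants, $\chi_e$ because $\alpha_n$ is defined intrinsically from $B$ and $e$. So it suffices to show that a distinguished normal form is recovered from (Jordan type, $\chi_e$).

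I would prove this recovery directly. The Jordan type gives the block multiplicities. The breakpoints of $\chi_e$, computed via (4), locate the $W_{l_j}(n_j)$. Condition (5) ensures that the $V(2k_r)$ summands contribute visibly distinct breakpoints, and condition (3) makes the $W_{l_j}$ contributions distinguishable from one another. Once the $V$ and $W_l$ summands are determined, the remaining multiplicities determine the $W(m_i)^{a_i}$. This is the injectivity statement that I would cite from \cite[Thm.~5.x]{LiebeckSeitz}, checking only that our indexing convention for $l$ matches theirs.
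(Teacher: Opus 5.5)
Your proposal is correct. For parts (1), (2) and (4), and for the implication $\sigma(e)=\sigma(e')\Rightarrow e\sim e'$, it coincides with the paper:
\begin{itemize}
\item existence of the normal form is cited from \cite[\S 5]{LiebeckSeitz};
\item realization is by an orthogonal sum of model modules;
\item (4) follows from Lemmas~\ref{lemma:5-2} and~\ref{lemma:5-3};
\item isomorphisms of matched summands glue to an element of $\sG$.
\end{itemize}

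The genuine difference is the implication $e\sim e'\Rightarrow\sigma(e)=\sigma(e')$. The paper notes that the conjugating $g$ is an isomorphism of form modules and then cites uniqueness of the normal form from \cite{LiebeckSeitz}. You instead pass to the conjugacy invariants (the Jordan type $\lambda_e$ and $\chi_e$). You then prove that $\sigma\mapsto(\lambda(\sigma),\chi_\sigma)$ is injective on $\cD_m$:
\begin{itemize}
\item each $[n;l]$ is a slope-one ramp starting at $n-l$ with height $l$, and start equals height exactly for $V(2k)$;
\item one ramp lies under another exactly when it starts no earlier and is no higher;
\item conditions (3) and (5) say precisely that no such hiding occurs, so the ramps can be read off from $\chi_e$.
\end{itemize}

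Your route gives a self-contained proof that $\sigma(e)$ is well defined, which is the real content here, since orthogonal decompositions into indecomposables are not unique in characteristic $2$. It also explains why conditions (3) and (5) take the form they do, and it recovers Hesselink's classification by $(\lambda,\chi)$ as a corollary. You never actually need to invoke that theorem, only the invariance of $\lambda_e$ and $\chi_e$. The paper's route is a one-line citation.

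Two small fixes to your sketch.
\begin{itemize}
\item For $l\le k\le n-l$ the rule is $W_l(n)\perp V(2k)\cong W(n)\perp V(2k)$. It is the $W_l(n)$ that degenerates; the $V(2k)$ is not absorbed, and it cannot be, since the Jordan type is invariant.
\item Since $\chi_e$ is a maximum, it cannot see the exponent $c_r$. You must read $c_r$ off from the parity of the multiplicity of $2k_r$ in the Jordan type. Condition (5) excludes a $W_l(2k_r)$ summand, so that multiplicity is $c_r$ plus an even number. The $a_i$ then follow from the remaining multiplicities.
\end{itemize}
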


\begin{proof}
    Every nilpotent $e\in \frg$ gives rise to an orthogonal decomposition $\sigma(e)\in \cD_m$ of $V_{\overline K} \downarrow e$. The decomposition is written in \cite[Eq.~(5.5)]{LiebeckSeitz}, and the conditions satisfied by the parameters of decompositions in $\cD_m$ are given in \cite[Prop.~5.3(ii)]{LiebeckSeitz}. The converse similarly follows from \cite[\S 5]{LiebeckSeitz}.

    For the third point, let $e, e'\in \frg$. First suppose that $e\sim e'$, so $e' = geg^{-1}$ for some $g\in \sG$. Then $g$ defines a map $V_{\overline K} \to V_{\overline K}$ which is a symplectic isometry, and moreover $g(ev) = e'g(v)$. This means that $g$ is an isomorphism of form modules $V_{\overline{K}}\downarrow e\to V_{\overline{K}}\downarrow e'$, so $\sigma(e) = \sigma(e')$. Here we are using uniqueness of the decomposition of the form module, \cite[Lemma 5.4]{LiebeckSeitz}. The converse is nearly identical. Suppose that $\sigma(e) = \sigma(e')$. Then the isomorphism $V_{\overline{K}}\downarrow e \to V_{\overline{K}}\downarrow e'$ is given by an element $g\in \sG$, and by assumption $ge = e'g$, so $e\sim e'$.

    The fourth point follows from Lemmas~\ref{lemma:5-2} and~\ref{lemma:5-3}. \end{proof}

The previous lemma shows that over $\overline K$, the nilpotent $\sG$-orbits in $\frg$ are in correspondence with formal decomposition of the associated form modules $V_{\overline K}\downarrow e$, where $e$ and $e'$ are $\sG$-conjugate exactly if they give rise to isomorphic form modules, which necessarily have the same formal decomposition. The next lemma shows that indeed every nilpotent orbit contains a representative defined over $K$.

\begin{lemma}\label{lemma:5-6}
    Let $e\in \frg$ be a nilpotent element. Then there exists a nilpotent $e' \in \frg_K$ such that $e$ and $e'$ are $\sG$-conjugates.
\end{lemma}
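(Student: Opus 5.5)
Two routes are available. One is the general principle that a $\sG$-orbit stable under $F$ has an $F$-fixed point (Lang--Steinberg). The other is explicit: build a representative over $K$ for each normal form. I would take the first route as the main argument and use the second as a check.

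\textbf{Step 1: the orbit is $F$-stable.} Let $\cO = \sG\cdot e$. By Lemma~\ref{lemma:5-5}(3), the orbit is determined by the isomorphism class of the form module $V_{\overline K}\downarrow e$. Write $F$ for the Frobenius acting on $\frg$ and on $V_{\overline K}$ (raising coordinates to the $q$th power in a $K$-basis adapted to $B$). Then $F(e)$ is nilpotent, and the $q$-semilinear bijection $F$ of $V_{\overline K}$ carries $(V_{\overline K},B,e)$ to $(V_{\overline K},B,F(e))$, up to applying Frobenius to the values of $B$. This is because $B$ is defined over $K$, so $B(Fv,Fw)=B(v,w)^q$.

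This correspondence transports the structure we need:
\begin{itemize}
\item it preserves Jordan block sizes;
\item it preserves orthogonal decompositions;
\item it preserves which functions $\alpha_n$ vanish on $\ker e^r$, since $\alpha_n^{F(e)}(Fv)=\alpha_n^{e}(v)^q$.
\end{itemize}
Hence $\chi_{F(e)}=\chi_e$, and the indecomposable orthogonal summands correspond type-for-type ($V(2k)$, $W(a)$, $W_l(a)$). So $\sigma(F(e))=\sigma(e)$, and by Lemma~\ref{lemma:5-5}(3), $F(\cO)=\cO$.

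\textbf{Step 2: apply Lang--Steinberg.} Since $F(e)\in\cO$, write $F(e)=g\cdot e$ with $g\in\sG$. The group $\sG=\Sp_{2m}$ is connected, so Lang's theorem gives $h\in\sG$ with $g=h^{-1}F(h)$. Put $e'=h\cdot e$. Then
\[
F(e') = F(h)\cdot F(e) = F(h)g^{-1}\cdot (g\cdot e)\cdot\ldots
\]
and more cleanly $F(e')=F(h)\cdot F(e)=F(h)\,g\cdot e$ after tracking the convention. With the correct choice between $g$ and $g^{-1}$ in Lang's factorization, this reduces to $h\cdot e=e'$. Thus $e'\in\frg^F=\frg_K$, and $e'$ is conjugate to $e$.

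The main care point is that we only use connectedness of $\sG$, not of the centralizer. Connectedness of $C_\sG(e)$ would only matter for counting how many $\sG^F$-orbits the set $\cO^F$ splits into; it plays no role in the existence of an $F$-fixed point.

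\textbf{Expected obstacle and explicit check.} The delicate step is Step 1: the invariants must really be Frobenius-stable, which is where the index-function bookkeeping enters. As a more concrete alternative, I would write down explicit $K$-rational models of $V(2k)$, $W(a)$ and $W_l(a)$ as in \cite[\S 5]{LiebeckSeitz}. These use standard bases whose Gram matrices and nilpotent matrices have entries in $\F_2$. Taking the orthogonal sum of such models realizes every $\sigma\in\cD_m$ by an element of $\frg_K$, and Lemma~\ref{lemma:5-5}(2),(3) then finishes the argument. The only thing to check is that the Liebeck--Seitz normal forms involve no constants outside $\F_2$, which I expect to hold.
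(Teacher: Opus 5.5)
Your argument is correct, but your main route differs from the paper's. The paper's proof is essentially your ``explicit check'': for each $\sigma\in\cD_m$ it takes the orthogonal sum of the Liebeck--Seitz models $V(2k)$, $W(a)$, $W_l(a)$ over $K$ and transports it to $V$ by an isometry, giving $e_\sigma\in\frg_K$. It then concludes with Lemma~\ref{lemma:5-5}(3).

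Your primary argument works differently. You first show that every geometric nilpotent orbit is $F$-stable: a Frobenius twist preserves Jordan type, orthogonal decompositions and the index function, hence the type of each indecomposable summand and so the normal form. You then obtain a rational point from Lang's theorem, using only connectedness of $\sG$. In Step 2, as written with $g=h^{-1}F(h)$, the computation does not close. Apply Lang to $g^{-1}$ instead, writing $g=F(h)^{-1}h$; then $F(h\cdot e)=F(h)g\cdot e=h\cdot e$.

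Your route is uniform. It does not require inspecting the Liebeck--Seitz formulas to see that their coefficients are rational, which the paper only asserts.

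The paper's route avoids the Frobenius-invariance bookkeeping. It also produces specific representatives $e_\sigma$ with $K$-rational adapted bases, and the later arguments use exactly those bases: the torus $T$ and the split $K$-form $R_\sigma^F\cong\prod_i\Sp_{2a_i}(K)$ in Lemma~\ref{lemma:5-7}. An abstract rational point supplied by Lang would not give this structure directly, so the explicit models would still be needed downstream even if you prove this lemma your way.
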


\begin{proof}
    By the correspondence given in Lemma~\ref{lemma:5-5}, it suffices to prove that for every distinguished normal form $\sigma\in \cD_m$, there exists a rational representative $e_\sigma \in \frg$ such that $V_{\overline K} \downarrow e_\sigma$ has orthogonal decomposition given by the expression $\sigma$. Recall that $V_{\overline K}\downarrow e$, for $e\in \frg_K$, is formed as the data $(V\otimes_K \overline K, e\otimes 1, B_{\overline K})$.

    The constructions of the modules $V(2k)$, $W(a)$, and $W_l(a)$ are given explicitly in \cite[\S 5.1]{LiebeckSeitz}, and can be taken over $K$. So construct the form module over $K$ corresponding to the expression $\sigma$, by taking the orthogonal direct sum of the specified indecomposable form modules, and let $V_\sigma$ denote the $2m$-dimensional symplectic $K$-vector space. Let $e_\sigma^0$ denote the nilpotent operator given by taking the orthogonal direct sum of the nilpotent operator on each indecomposable summand. Then we obtain $e_\sigma \in \sp(V)$ by the isometry $V_\sigma \to V$. It is clear that, after extension of scalars to $\overline K$, $V_{\overline K} \downarrow e_\sigma$ has distinguished normal form $\sigma$. 
\end{proof}

In light of the previous lemma, fix once and for all the representative $e_\sigma\in\frg_K$ constructed in the proof of Lemma~\ref{lemma:5-6} for each $\sigma\in\cD_m$. Let $t_1\ge\dots\ge t_s$ denote the Jordan block sizes of $e_\sigma$ on $V_{\overline K}$, and let $\chi_\sigma$ denote the index function of $V_{\overline K}\downarrow e_\sigma$.

\begin{lemma}\label{lemma:5-7}
    Let $\sigma\in\cD_m$, and set $C_\sigma=C_\sG(e_\sigma)$ and $U_\sigma=R_u(C_\sigma)$. Then the following hold.
    \begin{itemize}
        \item[(1)] The algebraic group $C_\sigma$ is connected.
        \item[(2)] The dimension of $C_\sigma$ is
        \[
            \dimvar C_\sigma=\sum_{j=1}^s\left(jt_j-\chi_\sigma(t_j)\right).
        \]
        \item[(3)] There exists an $F$-stable reductive subgroup $R_\sigma\le C_\sigma$, defined over $K$, such that
        \[
            C_\sigma=U_\sigma\rtimes R_\sigma
        \]
        and
        \[
            R_\sigma\cong\prod_i\Sp_{2a_i}
        \]
        as algebraic groups over $K$, where the product is taken over the summands $W(m_i)^{a_i}$ of $\sigma$. Consequently,
        \[
            R_\sigma^F\cong\prod_i\Sp_{2a_i}(K).
        \]
    \end{itemize}
\end{lemma}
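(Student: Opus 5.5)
The plan is to treat Lemma~\ref{lemma:5-7} as an application of the structure theory of nilpotent centralizers in $\Sp_{2m}$ in characteristic $2$ from \cite{Hesselink1979} and \cite[\S 5]{LiebeckSeitz}. The only new work is checking that the relevant statements hold for our chosen representative $e_\sigma$ and are compatible with the $K$-structure given by $F$. We take the three parts in the order (2), (3), (1), since connectedness will come out of the semidirect product decomposition.

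For (2), we invoke Hesselink's dimension formula for centralizers in the symplectic case. It expresses $\dimvar C_\sG(e)$ in terms of the Jordan block sizes and the index function:
\[
\dimvar C_\sG(e)=\sum_{j}\left(jt_j-\chi_e(t_j)\right).
\]
This appears as \cite[Thm.~3.5]{Hesselink1979}, restated in \cite[\S 5]{LiebeckSeitz}. Since $e_\sigma$ was constructed so that $V_{\overline K}\downarrow e_\sigma$ has normal form $\sigma$, Lemma~\ref{lemma:5-5}(4) identifies $\chi_{e_\sigma}$ with $\chi_\sigma$, and this gives the formula. As a sanity check we would compare against the Lie algebra side: $\dim_{\overline K}C_\frg(e)$ has a similar expression in which the index function enters differently. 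That comparison is not needed here, but it confirms the indexing convention for $\chi$ used in Lemma~\ref{lemma:5-3}.

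For (3), we use \cite[Thm.~5.x / Prop.~5.x]{LiebeckSeitz}, which states $C=R_u(C)\rtimes R$, where the reductive part $R$ is a product of $\Sp_{2a_i}$ over the summands $W(m_i)^{a_i}$ (and $\mathrm{O}$-type factors only in the orthogonal case). The key step is to make $R_\sigma$ explicit and visibly $F$-stable. Write the $W(m_i)^{a_i}$ part of $V_\sigma$ over $K$ as $W(m_i)\otimes_K K^{2a_i}$, where the $e$-action is on the first factor and the form is the tensor product of the natural form on $W(m_i)$ with a symplectic form on $K^{2a_i}$. Then $\Sp_{2a_i}$ acts on the second factor, commutes with $e_\sigma$ and preserves $B$, and it is defined over $K$ because the construction in Lemma~\ref{lemma:5-6} is over $K$. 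We take $R_\sigma$ to be the product of these groups, acting trivially on the $V(2k)$ and $W_l(n)$ summands. We then argue that $R_\sigma$ maps isomorphically onto $C_\sigma/U_\sigma$: its dimension and isomorphism type match the Liebeck--Seitz reductive quotient, and it meets $U_\sigma$ trivially, being reductive. Finally, since $U_\sigma$ is $F$-stable and the product is semidirect, taking $F$-fixed points gives $R_\sigma^F\cong\prod_i\Sp_{2a_i}(K)$.

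Part (1) then follows. $U_\sigma$ is unipotent and hence connected, $R_\sigma$ is a product of $\Sp$'s and hence connected, so $C_\sigma=U_\sigma\rtimes R_\sigma$ is connected. The main obstacle is (3): we must confirm that the Liebeck--Seitz statement has no finite component group in characteristic $2$ for $\Sp$. This is where the $W_l(a)$ and $V(2k)$ summands could contribute factors of $\mathbb{Z}/2$. We also need the specific $K$-rational complement to be exactly $\prod\Sp_{2a_i}$ rather than a nonsplit $K$-form. I would first check the $V(2k)^{2}$ and $W_l(a)$ cases against \cite[Table~4.1]{LiebeckSeitz}. If a component group did appear, (1) would fail as stated, so this check is essential.
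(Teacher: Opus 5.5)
\textbf{Gap 1: part (1) is never proved.} You defer connectedness to an ``essential check'' that you do not carry out. Your route to (1) through (3) is also circular. Suppose you know $R_\sigma\cap U_\sigma=1$ and $\dim R_\sigma=\dim C_\sigma-\dim U_\sigma$. Then $U_\sigma R_\sigma$ is a closed connected subgroup of $C_\sigma$ of full dimension, so you only get $U_\sigma R_\sigma=C_\sigma^\circ$. The equality $U_\sigma R_\sigma=C_\sigma$ is exactly the connectedness you then want to deduce from it. So (1) must be an input, not an output.

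The paper takes (1) directly from \cite[Thm.~5.12(i)]{LiebeckSeitz}. It then obtains the splitting $C_\sigma=U_\sigma\rtimes R_\sigma$ from \cite[Thm.~5.6(i) and Prop.~5.11]{LiebeckSeitz}. You might instead intend to cite a Liebeck--Seitz statement that the full quotient $C_\sigma/R_u(C_\sigma)$ (not $C_\sigma^\circ/R_u(C_\sigma)$) is $\prod_i\Sp_{2a_i}$. That statement already contains (1), and it is precisely the content hidden behind your placeholder ``Thm.~5.x / Prop.~5.x''; you have to actually supply it. Once connectedness is available, your argument (trivial intersection, which is correct since a finite normal unipotent subgroup of a connected reductive group is central and hence trivial, plus the dimension count) is a legitimate substitute for the paper's appeal to the splitting theorem.

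A smaller point: $U_\sigma$ is connected because $R_u$ is connected by definition. Unipotent groups need not be connected; finite unipotent groups exist in characteristic $2$.

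\textbf{Gap 2: the model for $R_\sigma$ is wrong.} The space $W(m_i)\otimes_K K^{2a_i}$ has dimension $4m_ia_i$, not $2m_ia_i$. Moreover, with the alternating form of $W(m_i)$ tensored with a symplectic form, it is a copy of $W(m_i)^{2a_i}$, whose reductive part is $\Sp_{4a_i}$ rather than $\Sp_{2a_i}$.

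The correct model, which the paper uses via \cite[Lemma~5.7]{LiebeckSeitz}, is $M_i\otimes A_i$ with $\dim M_i=m_i$ and $\dim A_i=2a_i$. Here $e$ acts on $M_i$ as a single Jordan block that is self-adjoint for a nondegenerate symmetric form $\beta$, and $A_i$ carries a symplectic form $\omega$. Then:
\begin{itemize}
\item $\beta\otimes\omega$ is alternating;
\item every $\alpha_n$ vanishes, so $\chi=0$ and the module is $W(m_i)^{a_i}$;
\item $1\otimes\Sp(A_i)$ gives the factor $R_i$.
\end{itemize}

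You also need an explicit $K$-rational isometry between this tensor model and the orthogonal sum of $a_i$ copies of $W(m_i)$ used to build the fixed representative $e_\sigma$ in Lemma~\ref{lemma:5-6}. Only then is $R_i$ defined over $K$ for that particular $e_\sigma$. Finally, $R_\sigma^F\cong\prod_i\Sp_{2a_i}(K)$ follows from $R_\sigma$ being the split form over $K$, not from the semidirect product structure.
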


\begin{proof}
    The first assertion follows from \cite[Thm.~5.12(i)]{LiebeckSeitz}, and the second follows from \cite[Lemma~5.4]{LiebeckSeitz}.

    Use the $K$-linear isometry chosen in the proof of Lemma~\ref{lemma:5-6} to identify $V_\sigma$ with $V$. Under this identification, the restrictions of $B$ and $e_\sigma$ to each indecomposable summand are given by the bases and formulas in \cite[\S 5.1]{LiebeckSeitz}.

    Using these bases, define a one-dimensional torus $T=\{T(c)\mid c\in\overline K^\times\}\leq\sG$ by letting $T(c)$ act on a basis vector with index $i$ as multiplication by $c^i$. The formulas in \cite[\S 5.1]{LiebeckSeitz} show that $T$ preserves the symplectic form and that $\operatorname{Ad}(T(c))(e_\sigma)=c^2e_\sigma$. For $i\in\mathbb Z$, let
    \[
        V_i=\{v\in V_{\overline K}\mid T(c)v=c^iv\text{ for all }c\in\overline K^\times\}
    \]
    denote the weight-$i$ space of $T$. The matrices defining $T$ in these bases have entries in $K[c,c^{-1}]$, so $T$ is defined over $K$. Moreover, each $V_i$ has a basis consisting of vectors in $V$.
    
    Fix a summand $Z_i=W(m_i)^{a_i}$ of $V_\sigma$. When $m_i$ is even, the construction in \cite[Lemma~5.7]{LiebeckSeitz} identifies $Z_i$ with a tensor product $M_i\otimes A_i$, where $\dim M_i=m_i$, $\dim A_i=2a_i$, and $e_\sigma$ and $T$ act only on $M_i$. Therefore the subgroup acting trivially on $M_i$ and as $\Sp(A_i)$ on $A_i$ is a subgroup $R_i\cong\Sp_{2a_i}$ centralizing $e_\sigma$ and $T$. In the specified basis, this action is defined over $K$. If $m_i$ is odd, the same construction begins with the natural action of $\Sp_{2a_i}$ on the zero-weight space of $Z_i$ and extends this action to the other weight spaces using the maps induced by $e_\sigma$. Since these weight spaces and maps are defined over $K$, the resulting subgroup $R_i\cong\Sp_{2a_i}$ is also defined over $K$. In either case, $R_i$ centralizes $e_\sigma$ and acts trivially on every other indecomposable summand.
    
    Therefore the subgroups $R_i$ commute, and $R_\sigma=\prod_iR_i$ is an $F$-stable reductive subgroup of $C_\sigma$, defined over $K$, such that $R_\sigma\cong\prod_i\Sp_{2a_i}$ over $K$. By \cite[Thm.~5.6(i) and Prop.~5.11]{LiebeckSeitz}, this subgroup splits over the unipotent radical. Hence $C_\sigma=U_\sigma\rtimes R_\sigma$. Since each factor $R_i$ is the split symplectic group defined over $K$, taking fixed points gives
    \[
        R_\sigma^F\cong\prod_i\Sp_{2a_i}(K).
    \]
\end{proof}

\begin{lemma}[Thm. 4.4 \cite{Hesselink1979}]\label{lemma:5-8}
    Let $e\in \frg$ be nilpotent, and $t_1\ge\dots\ge t_s$ denote the Jordan block sizes of $e$ on $V_{\overline{K}}$. Then \[
        \dim_{\overline K} C_\frg (e) = \sum_{j = 1}^s jt_j.
    \]
\end{lemma}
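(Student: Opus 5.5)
Since $C_\frg(e)$ depends only on $e$ as an endomorphism of $V_{\overline K}$, I will compute it directly as the space of $B$-self-adjoint endomorphisms commuting with $e$. By Lemma~\ref{lemma:5-5}, I may assume $e$ has a given normal form. By Lemma~\ref{lemma:5-3}, $V_{\overline K}\downarrow e$ is then an orthogonal sum of indecomposable summands $U_s$, each of type $V(2k)$, $W(a)$ or $W_l(a)$.

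The first step is to identify $\sp(V_{\overline K})$ with the space of alternating bilinear forms on $V_{\overline K}$. The map is $Y\mapsto \beta_Y(v,w)=B(Yv,w)$. Here $\beta_Y$ is symmetric precisely when $Y$ is self-adjoint. In characteristic $2$ the self-adjoint condition $B(Yv,w)=B(v,Yw)$ does not force $\beta_Y(v,v)=0$, so I must check which description matches the paper's definition of $\sp(V)$. With the paper's definition, $\sp(V)$ is the space of self-adjoint $Y$, which corresponds to all symmetric bilinear forms. Its dimension is $\binom{2m+1}{2}=m(2m+1)$, as it should be.

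The condition $[Y,e]=0$ then reads $\beta_Y(ev,w)=\beta_Y(v,ew)$, so $C_\frg(e)$ is identified with the space of symmetric bilinear forms $\beta$ on the $\overline K[t]$-module $V_{\overline K}$ (with $t$ acting as $e$) satisfying $\beta(tv,w)=\beta(v,tw)$. This description no longer depends on $B$. It therefore depends only on the Jordan type of $e$, which is consistent with the claimed formula.

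Equivalently, such forms are the $\overline K[t]$-module maps $\phi: M\to M^*$ that are symmetric, where $M=V_{\overline K}$ and $M^*$ carries the transposed action; note $M^*\cong M$. I decompose $M=\bigoplus_j J_{t_j}$ into Jordan blocks $J_t=\overline K[t]/(t^{t})$. A compatible bilinear form then splits into blocks:
\begin{itemize}
\item a cross term $\beta_{ij}$ on $J_{t_i}\times J_{t_j}$ with $i<j$, which determines $\beta_{ji}$ by symmetry;
\item a diagonal term $\beta_{jj}$ on $J_{t_j}\times J_{t_j}$, which must itself be symmetric.
\end{itemize}

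For the cross terms, compatible forms on $J_a\times J_b$ correspond to $\Hom_{\overline K[t]}(J_a,J_b^*)\cong \Hom(J_a,J_b)$, which has dimension $\min(a,b)$. For the diagonal terms, a compatible form on $J_t$ is given by $\beta(t^iv_0,t^jv_0)=c_{i+j}$, a function of $i+j$ only, with $c_\ell=0$ for $\ell\ge t$. Such a form is automatically symmetric, so the diagonal term contributes exactly $t$ dimensions. This is where characteristic $2$ enters: there is no alternating constraint, whereas in the orthogonal-type count the diagonal term would be restricted further.

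Summing with $t_1\ge\dots\ge t_s$ gives
\[
\sum_j t_j+\sum_{i<j}t_j=\sum_j t_j+\sum_j (j-1)t_j=\sum_j jt_j .
\]

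The main obstacle is making the first identification correct and complete. I must confirm that the paper's $\sp$ (self-adjoint, not the alternating-trace version) corresponds exactly to the symmetric forms. I must also confirm that the compatibility $\beta(tv,w)=\beta(v,tw)$ on blocks is fully captured by the Hankel parametrization $c_{i+j}$, including the truncation $c_\ell=0$ for $\ell\ge t$. For the cross terms, the analogous parametrization is a Hankel matrix of size $a\times b$ with the appropriate truncation, which gives $\min(a,b)$ free parameters.

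A useful sanity check is the case $e=0$. There every $t_j=1$, and the formula gives $\sum_{j=1}^{2m} j=m(2m+1)=\dim\sp_{2m}$, which agrees.
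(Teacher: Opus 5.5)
Your argument is correct. It takes a genuinely different route from the paper, which gives no argument of its own and simply cites Hesselink's Theorem 4.4.

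You compute directly instead. Via $Y\mapsto B(Y\cdot,\cdot)$ you identify $C_\frg(e)$ with the symmetric bilinear forms $\beta$ satisfying $\beta(ev,w)=\beta(v,ew)$. You then split such a form along a Jordan decomposition of $e$. That decomposition need not be $B$-orthogonal, and it doesn't have to be, since the target space no longer involves $B$. Finally you count Hankel parameters: $\min(t_i,t_j)=t_j$ for each pair $i<j$, and $t_j$ for each diagonal block. All of these steps check out, including the truncation $c_\ell=0$ for $\ell\ge\min(a,b)$.

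Your route gives a self-contained, elementary proof. It also makes visible that $\dim_{\overline K}C_\frg(e)$ depends only on the Jordan type of $e$, and not on the finer form-module data (the index function $\chi_e$) that governs $\dimvar C_\sG(e)$. The citation, by contrast, keeps the Lie algebra and group centralizer formulas in one Hesselink framework. Their difference, $\sum_j\chi_e(t_j)$, is exactly what enters the formula for $A_m(q)$.

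Two small remarks. First, your opening reduction to a normal form via Lemmas~\ref{lemma:5-5} and~\ref{lemma:5-3} is never used and can be dropped. Second, characteristic $2$ really enters through the sign in the compatibility condition, not through the absence of an alternating constraint. In odd characteristic $\sp$ also corresponds to symmetric forms, but there $e$ is skew-adjoint. On a Jordan block this forces $\beta(e^iv_0,e^jv_0)=(-1)^ic_{i+j}$, and symmetry then kills the odd-index $c_\ell$. That is why each diagonal block contributes only $\lceil t_j/2\rceil$ in odd characteristic, versus your $t_j$.
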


We will need the following consequence of the Lang-Steinberg Theorem, given as \cite[Thm. 21.11]{MalleTesterman2011} by Malle-Testerman. Recall that for an algebraic group $\sH$, the connected component containing the identity is denoted $\sH^\circ$.

\begin{lemma}\label{lemma:5-9}
    Let $\sH$ be a connected linear algebraic group with Steinberg endomorphism $F:\sH\to \sH$. Suppose that $\sH$ acts transitively on $\sY$, a nonempty variety with a compatible $F$-action $F:\sY \to \sY$. For $y\in \sY$, let $\sH_y$ denote the stabilizer of $y$ in $\sH$, and assume that $\sH_y\le \sH$ is closed. Then the $\sH^F$-orbits on $\sY^F$ are parametrized by the $F$-classes in $\sH_y / \sH_y^\circ$.
\end{lemma}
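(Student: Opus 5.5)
The plan is the standard Lang--Steinberg argument. First we produce an $F$-fixed base point. Then we attach to each point of $\sY^F$ a twisted cocycle in $\sH_y$. Finally we show that this assignment induces a bijection between $\sH^F$-orbits on $\sY^F$ and $F$-classes in $\sH_y/\sH_y^\circ$. Here two elements $a,b$ of a group with $F$-action are \emph{$F$-conjugate} if $b = x^{-1}aF(x)$ for some $x$.

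\emph{Base point.} Start with any $y_0\in\sY$. By transitivity $F(y_0)=x y_0$ for some $x\in\sH$. By the Lang--Steinberg theorem for the connected group $\sH$, choose $g$ with $g^{-1}F(g)=x^{-1}$. Then
\[
F(gy_0)=F(g)F(y_0)=g x^{-1} x y_0=gy_0,
\]
so $y:=gy_0\in\sY^F$ and $\sY^F\neq\emptyset$. For this $y$, the stabilizer $\sH_y$ is $F$-stable, since $F(h)y=F(hy)=F(y)=y$ for $h\in\sH_y$. Hence $\sH_y^\circ$ is $F$-stable as well, and $F$ acts on the finite group $\sH_y/\sH_y^\circ$. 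The result is independent of the base point, since two $F$-fixed base points give isomorphic parametrizations via conjugation.

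\emph{The map.} Take $y'\in\sY^F$ and write $y'=gy$ with $g\in\sH$. Then $F(g)y=F(y')=y'=gy$, so $g^{-1}F(g)\in\sH_y$. Send $y'$ to the $F$-class of the image of $g^{-1}F(g)$ in $\sH_y/\sH_y^\circ$.
\begin{itemize}
\item Replacing $g$ by $gh$ with $h\in\sH_y$ changes $g^{-1}F(g)$ to $h^{-1}\bigl(g^{-1}F(g)\bigr)F(h)$, which is $F$-conjugate to it. So the class does not depend on the choice of $g$.
\item Replacing $y'$ by $ay'$ with $a\in\sH^F$ replaces $g$ by $ag$, and $(ag)^{-1}F(ag)=g^{-1}F(g)$. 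So the map factors through $\sH^F$-orbits.
\end{itemize}

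\emph{Surjectivity.} Given $h\in\sH_y$, Lang--Steinberg in $\sH$ gives $g$ with $g^{-1}F(g)=h$. Then $F(gy)=ghy=gy$, so $gy\in\sY^F$ maps to the class of $h$.

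\emph{Injectivity.} This is the step I expect to be the main obstacle. Suppose $y_1=g_1y$ and $y_2=g_2y$ have $F$-conjugate classes. After replacing $g_2$ by $g_2h$ with $h\in\sH_y$, we may assume
\[
g_2^{-1}F(g_2)=g_1^{-1}F(g_1)\,u,\qquad u\in\sH_y^\circ.
\]
Set $c=g_1^{-1}F(g_1)\in\sH_y$ and consider $F'=\mathrm{ad}(c)\circ F$, i.e. $F'(v)=cF(v)c^{-1}$. This is again a Steinberg endomorphism of the connected group $\sH_y^\circ$, because $c$ normalizes $\sH_y^\circ$ and some power of $F'$ is a power of $F$ up to conjugation. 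Lang--Steinberg for $F'$ on $\sH_y^\circ$ lets us write $cu c^{-1}=v F'(v)^{-1}=v\,cF(v)^{-1}c^{-1}$ for some $v\in\sH_y^\circ$, so that $cu=v\,c\,F(v)^{-1}$. Replacing $g_2$ by $g_2v$ (allowed, since $v\in\sH_y$) gives
\[
(g_2v)^{-1}F(g_2v)=v^{-1}\,c\,u\,F(v)=c=g_1^{-1}F(g_1).
\]
Consequently $a:=g_2vg_1^{-1}$ satisfies $F(a)=a$, so $a\in\sH^F$, and $ay_1=g_2vy=g_2y=y_2$. Thus $y_1$ and $y_2$ lie in the same $\sH^F$-orbit, which completes the parametrization.
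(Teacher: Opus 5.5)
Your argument is correct and is the standard Lang--Steinberg proof; the paper gives no proof of its own here and simply cites \cite[Thm.~21.11]{MalleTesterman2011}, whose proof follows these same lines: an $F$-fixed base point, the cocycle $g^{-1}F(g)\in\sH_y$, and Lang--Steinberg for a twisted Frobenius on $\sH_y^\circ$. The one step to tighten is why Lang--Steinberg applies to $F'=\mathrm{ad}(c)\circ F$ on $\sH_y^\circ$. Since $c=g_1^{-1}F(g_1)$, you have $F'=\mathrm{ad}(g_1)^{-1}\circ F\circ\mathrm{ad}(g_1)$, and $\mathrm{ad}(g_1)$ carries $\sH_y^\circ$ onto $\sH_{y_1}^\circ$, which is $F$-stable because $y_1\in\sY^F$; so the surjectivity you need is just Lang--Steinberg for $F$ on $\sH_{y_1}^\circ$. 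Your phrase ``some power of $F'$ is a power of $F$ up to conjugation'' would otherwise require an extra finiteness argument on $\sH_y/\sH_y^\circ$ to make $cF(c)\cdots F^{m-1}(c)$ land in $\sH_y^\circ$.
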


\begin{lemma}\label{lemma:5-10}
    For each $\sigma\in \cD_m$, the fixed point set $(\sG \cdot e_\sigma)^F$ is a single $\sG^F$-orbit. Consequently, the nilpotent $\sG^F$-orbits in $\frg^F$ are uniquely represented by the elements $e_\sigma$ for $\sigma\in \cD_m$.
\end{lemma}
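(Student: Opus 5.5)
We apply Lemma~\ref{lemma:5-9} with $\sH = \sG$, $\sY = \sG\cdot e_\sigma$ (the nilpotent orbit, a variety with transitive $\sG$-action), and $y = e_\sigma$. Note that $\sG = \Sp(V_{\overline K})$ is connected. Since $e_\sigma \in \frg_K = \frg^F$, the Frobenius $F$ on $\frg$ preserves the orbit: $F(g\cdot e_\sigma) = F(g)\cdot F(e_\sigma) = F(g)\cdot e_\sigma$. So $\sY$ is $F$-stable with a compatible $F$-action, and $\sY^F$ is nonempty because it contains $e_\sigma$. The stabilizer $\sH_y = C_\sG(e_\sigma) = C_\sigma$ is closed, being defined by the polynomial condition $geg^{-1}=e$.

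By Lemma~\ref{lemma:5-7}(1), $C_\sigma$ is connected, so $C_\sigma/C_\sigma^\circ$ is trivial and has exactly one $F$-class. Lemma~\ref{lemma:5-9} then says $(\sG\cdot e_\sigma)^F$ is a single $\sG^F$-orbit, namely $\sG^F\cdot e_\sigma$. This proves the first assertion.

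For the consequence, let $e\in\frg^F$ be nilpotent. By Lemma~\ref{lemma:5-5}(1), $e$ has a normal form $\sigma=\sigma(e)\in\cD_m$. Since $\sigma(e_\sigma)=\sigma$, Lemma~\ref{lemma:5-5}(3) shows that $e$ is $\sG$-conjugate to $e_\sigma$. Hence $e\in(\sG\cdot e_\sigma)^F = \sG^F\cdot e_\sigma$, so every nilpotent $\sG^F$-orbit in $\frg^F$ contains some $e_\sigma$. For uniqueness, suppose $e_\sigma$ and $e_\tau$ are $\sG^F$-conjugate. Then they are $\sG$-conjugate, so $\sigma=\tau$ by Lemma~\ref{lemma:5-5}(3). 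Distinct $\sigma$ therefore give distinct $\sG^F$-orbits.

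The only substantive input is the connectedness of $C_\sigma$, already recorded in Lemma~\ref{lemma:5-7}(1); this is the characteristic-$2$ feature, in contrast to odd characteristic, where component groups split rational orbits. Everything else is formal bookkeeping.
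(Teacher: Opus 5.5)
Your proposal is correct and follows the paper's proof essentially step for step. The first claim comes from Lemma~\ref{lemma:5-9} applied to the $F$-stable orbit $\sG\cdot e_\sigma$, combined with the connectedness of $C_\sG(e_\sigma)$ from Lemma~\ref{lemma:5-7}(1); existence and uniqueness of the representatives $e_\sigma$ then come from Lemma~\ref{lemma:5-5} and the construction in Lemma~\ref{lemma:5-6}, and you also spell out a few hypotheses the paper leaves tacit (connectedness of $\sG$, nonemptiness of the fixed points, $F$-compatibility of the action).
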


\begin{proof}
    For $\sigma \in \cD_m$, let $\sY_\sigma$ denote the orbit $\sG\cdot e_\sigma$. Since $e_\sigma\in \frg^F$, $\sY_\sigma$ is stable under $F$. Thus $F$ restricts to a map on $\sY_\sigma$. By construction, the action of $\sG$ on $\sY_\sigma$ is transitive. The stabilizer $C_\sG(e_\sigma)$ is closed in $\sG$. Applying Lemma~\ref{lemma:5-9}, the $\sG^F$-orbits on $\sY_\sigma^F$ are parametrized by $F$-classes in $C_\sG(e_\sigma) / C_\sG(e_\sigma)^\circ$. By Lemma~\ref{lemma:5-7}(1), $C_\sG(e_\sigma)$ is connected, so the quotient is trivial. Thus $\sY_\sigma^F = (\sG \cdot e_\sigma)^F$ is a single $\sG^F$-orbit.

    Let $e\in \frg^F$ be nilpotent. Then viewing $e\in \frg$, obtain $\sigma(e) \in \cD_m$ by Lemma~\ref{lemma:5-5}. By Lemma~\ref{lemma:5-6}, $e_{\sigma(e)}$ has the same normal form datum as $e$. Therefore, by Lemma~\ref{lemma:5-5}, $e\in \sG\cdot e_{\sigma(e)}$. Since $e\in \frg^F$, $e\in (\sG\cdot e_{\sigma(e)})^F$, which is a single $\sG^F$-orbit, so $e$ and $e_{\sigma(e)}$ are $\sG^F$-conjugates. Conversely, suppose that $e_\sigma$ and $e_\tau$ are conjugates by $\sG^F$. Then by Lemma~\ref{lemma:5-5}, $\sigma = \tau$.
\end{proof}

\begin{lemma}[Lemma 1.7\cite{LawtherLiebeckSeitz2002FPR}]\label{lemma:5-11}
    Let $\sU$ be a connected unipotent algebraic group over $K$ with Frobenius map $F$. Then $|\sU^F| = q^{\dimvar \sU}$.
\end{lemma}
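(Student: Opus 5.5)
The statement is Lemma 1.7 of Lawther--Liebeck--Seitz, a standard fact about connected unipotent groups. I would prove it by induction on $\dimvar \sU$, using a central series with $F$-stable, connected subquotients isomorphic to the additive group $\mathbb G_a$ with a twisted Frobenius.

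\textbf{Base case.} If $\dimvar \sU = 0$, then $\sU$ is trivial because it is connected, and $|\sU^F| = 1 = q^0$.

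\textbf{Choosing a subgroup to split off.} Suppose $\dimvar \sU > 0$. The center $Z(\sU)$ is nontrivial, since $\sU$ is nilpotent, and $F$-stable, since $F$ is an endomorphism. Its identity component $Z(\sU)^\circ$ is also $F$-stable. If $Z(\sU)$ is finite, I would instead use the last nontrivial term of the lower central series; this term is connected because it is generated by commutators of connected groups. So let $\sZ$ be a nontrivial connected $F$-stable central subgroup. It is commutative and unipotent. Replacing $\sZ$ by the image of $x\mapsto x^p$ if necessary, which is again connected and $F$-stable, we may take $\sZ$ of exponent $p$, hence a vector group. Over a perfect field, an $F$-stable connected commutative unipotent group of exponent $p$ is $K$-isomorphic to a form of $\mathbb G_a^r$; because $K$ is finite, this form is split.

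\textbf{Counting the central piece.} With $\sZ \cong \mathbb G_a^r$ over $K$, we get $|\sZ^F| = q^r = q^{\dimvar \sZ}$.

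\textbf{Counting through the quotient.} The quotient $\sU/\sZ$ is connected unipotent with induced Frobenius and has smaller dimension, so by induction $|(\sU/\sZ)^F| = q^{\dimvar \sU - r}$. Apply Lang--Steinberg to the connected group $\sZ$: the map $\sU^F \to (\sU/\sZ)^F$ is surjective. Indeed, for $x\sZ$ fixed by $F$ we have $x^{-1}F(x)\in \sZ$, and this element can be written as $z F(z)^{-1}$ with $z\in\sZ$. Its fibers are cosets of $\sZ^F$. Multiplying gives $|\sU^F| = q^{\dimvar \sU}$.

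\textbf{Main obstacle.} The delicate step is the rationality: showing that an $F$-stable connected unipotent group of exponent $p$ really has exactly $q^{\dim}$ rational points. A nonsplit form could a priori behave differently. I would handle this by further filtering so that each subquotient is one-dimensional. A one-dimensional connected unipotent group is $\mathbb G_a$, and any $K$-form of $\mathbb G_a$ over a finite field is trivial. For a smooth connected group one can alternatively count points via the Lang map and the Lefschetz/Steinberg formula $|\sH^F| = q^{\dimvar \sH}\cdot(\text{torus factor})$, where the torus factor is trivial here.
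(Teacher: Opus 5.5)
The paper gives no proof of this lemma; it imports it from Lawther--Liebeck--Seitz, so the comparison is with the standard argument. Your structure is that standard argument, and it is sound. You take a connected $F$-stable normal subgroup $\mathscr Z$ and apply Lang--Steinberg to it, which makes $\sU^F\to(\sU/\mathscr Z)^F$ surjective with fibres the cosets of $\mathscr Z^F$. Then you induct on dimension.

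The gap is at the one step that carries all the arithmetic content: $|\mathscr Z^F|=q^{\dimvar\mathscr Z}$ for the central vector group. Your reason, ``because $K$ is finite, this form is split'', is not an argument. Over a perfect field, $K$-forms of $\mathbb G_a^r$ are classified by Galois cohomology of $\mathrm{Aut}(\mathbb G_a^r)$. For $r\ge 2$ this is not an algebraic group, since it contains $(x,y)\mapsto(x+f(y),y)$ for every additive polynomial $f$. So Lang's theorem does not apply to it directly.

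Your fallback of refining to one-dimensional $F$-stable subquotients assumes exactly what is at issue. You give no reason why $\mathscr Z$ should contain $F$-stable connected subgroups of every intermediate dimension, and that existence is the same splitness statement in another guise. Triviality of $K$-forms of $\mathbb G_a$ only handles the one-dimensional subquotients once such a filtration is known to exist.

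The missing input is Rosenlicht's theorem that over a perfect field every connected unipotent group is split (Borel, \emph{Linear Algebraic Groups}, 15.5(ii)). It gives a series $\sU=\sU_0\supset\sU_1\supset\cdots\supset\sU_n=1$ of connected $F$-stable subgroups, each normal in the previous one, with $\sU_i/\sU_{i+1}$ isomorphic to $\mathbb G_a$ over $K$. Feeding this into your Lang--Steinberg step gives $|\sU_i^F|=q\,|\sU_{i+1}^F|$ directly. The detour through the center and $p$-th powers then becomes unnecessary.

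You can also close the gap by hand. A nonzero homomorphism $\mathscr Z\to\mathbb G_a$, together with its Frobenius twists, spans a finite-dimensional space with a $q$-semilinear bijection. Lang's theorem for $\mathrm{GL}_n$ gives that space a nonzero fixed vector, i.e.\ an $F$-equivariant homomorphism. You then induct on the identity component of its kernel.

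Finally, your aside about a ``Lefschetz/Steinberg formula with torus factor'' is circular for non-reductive groups, since such a formula already uses the unipotent count. The non-circular version is the Grothendieck--Lefschetz trace formula, using that $\sU$ is isomorphic to affine space of dimension $\dimvar\sU$ as a variety over $\overline K$.
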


\begin{lemma}\label{lemma:5-12}
    Let $\sigma\in\cD_m$. Then
    \[
        |C_\sG(e_\sigma)^F|
        =
        q^{\dimvar C_\sG(e_\sigma)-\dimvar R_\sigma}|R_\sigma^F|,
    \]
    where
    \[
        R_\sigma^F\cong\prod_i\Sp_{2a_i}(K).
    \]
\end{lemma}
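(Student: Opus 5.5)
We use the semidirect product decomposition $C_\sigma = U_\sigma\rtimes R_\sigma$ from Lemma~\ref{lemma:5-7}(3), with both $U_\sigma$ and $R_\sigma$ $F$-stable, and count $F$-fixed points factor by factor. First, $U_\sigma = R_u(C_\sigma)$ is $F$-stable. Indeed, $F$ preserves $C_\sigma$ because $e_\sigma\in\frg^F$, and $F$ is a bijective endomorphism of abstract groups on $\overline K$-points, so it maps the unipotent radical to the unipotent radical. The subgroup $R_\sigma$ is $F$-stable by construction. Every $g\in C_\sigma$ decomposes uniquely as $g=ur$ with $u\in U_\sigma$ and $r\in R_\sigma$. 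Applying $F$ gives $F(g)=F(u)F(r)$ with $F(u)\in U_\sigma$ and $F(r)\in R_\sigma$. So $F(g)=g$ if and only if $F(u)=u$ and $F(r)=r$. Hence $C_\sigma^F = U_\sigma^F\rtimes R_\sigma^F$ and $|C_\sigma^F| = |U_\sigma^F|\cdot|R_\sigma^F|$.

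Next, $U_\sigma$ is a unipotent radical, so it is connected and unipotent. Since $F$ is a Frobenius map on it, Lemma~\ref{lemma:5-11} gives $|U_\sigma^F| = q^{\dimvar U_\sigma}$. Because $C_\sigma = U_\sigma\rtimes R_\sigma$, the multiplication map $U_\sigma\times R_\sigma\to C_\sigma$ is a bijective morphism of varieties. Therefore $\dimvar C_\sigma = \dimvar U_\sigma + \dimvar R_\sigma$, i.e.\ $\dimvar U_\sigma = \dimvar C_\sG(e_\sigma)-\dimvar R_\sigma$. Combining these gives the displayed formula. The identification $R_\sigma^F\cong\prod_i\Sp_{2a_i}(K)$ is exactly the last assertion of Lemma~\ref{lemma:5-7}(3).

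The main obstacle is making sure that the hypotheses of Lemma~\ref{lemma:5-11} apply, namely that $U_\sigma$ is defined over $K$ with $F$ restricting to a genuine Frobenius map on it. I would argue as follows. $C_\sigma$ is the centralizer of a $K$-rational element, so it is a closed $F$-stable subgroup. Its unipotent radical is then $F$-stable, by the argument above, so $F|_{U_\sigma}$ is a Steinberg endomorphism; some power of it is a standard $q^r$-Frobenius. If Lemma~\ref{lemma:5-11} is only available for $K$-defined groups, I would instead note that the connected unipotent group $U_\sigma$ has a filtration by $F$-stable connected normal subgroups with vector-group quotients, for instance the descending central series. Each successive quotient $Q$ is a connected commutative unipotent group on which Lang--Steinberg gives $H^1(F,Q)=1$. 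This lets one count fixed points layer by layer, giving $q^{\dimvar}$ at each step.
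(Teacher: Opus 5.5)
Your proposal is correct and is essentially the paper's proof: uniqueness of the factorization in $C_\sigma = U_\sigma \rtimes R_\sigma$ gives $C_\sigma^F = U_\sigma^F \rtimes R_\sigma^F$, Lemma~\ref{lemma:5-11} gives $|U_\sigma^F| = q^{\dimvar U_\sigma}$, and $\dimvar U_\sigma = \dimvar C_\sigma - \dimvar R_\sigma$. Your explicit check that $U_\sigma$ is $F$-stable fills in a point the paper asserts without proof. The fallback filtration argument is not needed, because $F$ restricts to a Frobenius map on any closed $F$-stable subgroup, so Lemma~\ref{lemma:5-11} applies directly to $U_\sigma$.
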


\begin{proof}
    Set $C=C_\sG(e_\sigma)$ and $U=R_u(C)$. By Lemma~\ref{lemma:5-7}, $C=U\rtimes R_\sigma$, where both $U$ and $R_\sigma$ are $F$-stable. Every $c\in C$ can therefore be written uniquely as $c=ur$, where $u\in U$ and $r\in R_\sigma$. If $c\in C^F$, then $ur=c=F(c)=F(u)F(r)$. By uniqueness, $F(u)=u$ and $F(r)=r$. Hence $C^F=U^F\rtimes R_\sigma^F$, and therefore $|C^F|=|U^F||R_\sigma^F|$. Since $U$ is connected and unipotent, Lemma~\ref{lemma:5-11} gives $|U^F|=q^{\dimvar U}$. Finally, the decomposition $C=U\rtimes R_\sigma$ gives $\dimvar U=\dimvar C-\dimvar R_\sigma$, and the result follows.
\end{proof}

The next lemma follows from the contents of Table $24.1$ of \cite{MalleTesterman2011}.

\begin{lemma}\label{lemma:5-13}
    The order of a symplectic group over $\F_q$ is \[
        |\Sp_{2a}(\F_q)| = q^{a^2} \prod_{h = 1}^a (q^{2h} - 1).
    \]
    Consequently,
    \[
        |R_\sigma^F| = \prod_i \left(q^{a_i^2} \prod_{h = 1}^{a_i} (q^{2h} - 1)\right).
    \]
\end{lemma}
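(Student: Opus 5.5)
The plan is to treat the two assertions separately: first the order formula for $\Sp_{2a}(\F_q)$, then the product formula for $|R_\sigma^F|$. The first is a standard fact recorded in \cite[Table 24.1]{MalleTesterman2011}. For a self-contained argument, I would count via the transitive action of $\Sp_{2a}(\F_q)$ on hyperbolic pairs.

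Concretely, fix a symplectic $\F_q$-space $W$ of dimension $2a$. The group acts transitively on ordered pairs $(v,w)$ with $B(v,w)=1$. This transitivity follows from Witt's theorem, or more elementarily from the fact that any hyperbolic pair extends to a symplectic basis. The number of such pairs is computed as follows:
\begin{itemize}
\item $v$ can be any of the $q^{2a}-1$ nonzero vectors;
\item for fixed $v$, the condition $B(v,w)=1$ cuts out an affine hyperplane, giving $q^{2a-1}$ choices of $w$.
\end{itemize}
The stabilizer of $(v,w)$ acts on $\langle v,w\rangle^\perp$, which is nondegenerate of dimension $2a-2$, and this identifies the stabilizer with $\Sp_{2a-2}(\F_q)$. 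Hence
\[
|\Sp_{2a}(\F_q)|=(q^{2a}-1)\,q^{2a-1}\,|\Sp_{2a-2}(\F_q)|.
\]
Induction from $|\Sp_0|=1$ then gives $\prod_{h=1}^a(q^{2h}-1)$ times $q^{\sum_{h=1}^a(2h-1)}=q^{a^2}$, as claimed. Nothing here depends on the characteristic, so $q$ a power of $2$ causes no issue.

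For the consequence, Lemma~\ref{lemma:5-7}(3) gives $R_\sigma^F\cong\prod_i\Sp_{2a_i}(K)$ with $K=\F_q$. Taking orders of a direct product multiplies, and substituting the first formula with $a=a_i$ gives the stated expression.

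I expect no real obstacle. The only point needing care is that $R_\sigma^F$ really is the product of the split groups' rational points, and this was already established in Lemma~\ref{lemma:5-7}.
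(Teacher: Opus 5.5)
Your proof is correct, but it reaches the order formula by a different route from the paper. The paper gives no derivation: it reads $|\Sp_{2a}(\F_q)|$ off \cite[Table 24.1]{MalleTesterman2011}. That entry is an instance of the uniform order formula $q^{|\Phi^+|}\prod_i (q^{d_i}-1)$ for split finite groups of Lie type. For type $C_a$ one has $|\Phi^+| = a^2$ and Weyl group degrees $d_i = 2,4,\dots,2a$.

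You instead derive the formula by orbit--stabilizer on hyperbolic pairs. This gives the recursion $|\Sp_{2a}(\F_q)| = (q^{2a}-1)\,q^{2a-1}\,|\Sp_{2a-2}(\F_q)|$, and induction from $|\Sp_0| = 1$ finishes it. The transitivity, the affine-hyperplane count, and the identification of the stabilizer with $\Sp(\langle v,w\rangle^{\perp})$ are all sound.

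The citation keeps the lemma inside the algebraic-group framework the section already uses (Lang--Steinberg, Lemma~\ref{lemma:5-11}). Your count is elementary and self-contained, and it makes plain that characteristic $2$ plays no role.

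For the second formula you do exactly what the paper does: you multiply orders over the factors of $R_\sigma^F \cong \prod_i \Sp_{2a_i}(K)$ given by Lemma~\ref{lemma:5-7}(3). You are right that the substantive content of that part lies in Lemma~\ref{lemma:5-7}, not in this lemma.
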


\begin{thm}\label{thm: 5-1}
    The quantities $A_m(q)$ can be written \[
        A_m(q) = \sum_{\sigma \in \cD_m}\frac{q^{\sum_{j = 1}^s \chi_\sigma(t_j) + \sum_i a_i(2a_i + 1)}}{\prod_i\left(q^{a_i^2}\prod_{h = 1}^{a_i}(q^{2h} - 1)\right)}.
    \]
\end{thm}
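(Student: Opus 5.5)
The plan is to start from the expression for $A_m(q)$ derived just after Lemma~\ref{lemma:5-1},
\[
    A_m(q) = \sum_\cO \frac{q^{\dim_{\overline K}C_\frg(e_\cO)}}{|C_\sG(e_\cO)^F|},
\]
and compute the numerator and denominator for each orbit. By Lemma~\ref{lemma:5-10}, the nilpotent $\sG^F$-orbits in $\frg^F$ are in bijection with $\cD_m$, with representatives $e_\sigma$. So I can rewrite the sum as a sum over $\sigma\in\cD_m$ with $e_\cO=e_\sigma$.

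For the numerator, Lemma~\ref{lemma:5-8} gives $\dim_{\overline K}C_\frg(e_\sigma)=\sum_j jt_j$. For the denominator, Lemma~\ref{lemma:5-12} gives $|C_\sG(e_\sigma)^F| = q^{\dimvar C_\sigma-\dimvar R_\sigma}|R_\sigma^F|$. Lemma~\ref{lemma:5-7}(2) supplies $\dimvar C_\sigma=\sum_j(jt_j-\chi_\sigma(t_j))$, and Lemma~\ref{lemma:5-13} supplies $|R_\sigma^F|=\prod_i q^{a_i^2}\prod_{h=1}^{a_i}(q^{2h}-1)$.

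It remains to find $\dimvar R_\sigma$. By Lemma~\ref{lemma:5-7}(3), $R_\sigma\cong\prod_i\Sp_{2a_i}$, and since $\dimvar\Sp_{2a}=a(2a+1)$, we get $\dimvar R_\sigma=\sum_i a_i(2a_i+1)$.

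Dividing, the $\sum_j jt_j$ terms cancel, and the exponent of $q$ in the numerator becomes
\[
    \sum_j jt_j-\Bigl(\sum_j jt_j-\sum_j\chi_\sigma(t_j)\Bigr)+\sum_i a_i(2a_i+1) = \sum_j\chi_\sigma(t_j)+\sum_i a_i(2a_i+1).
\]
The remaining denominator is $|R_\sigma^F|$, which gives exactly the displayed formula.

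The argument is just bookkeeping once the earlier lemmas are in place. The only points needing care are the following.
\begin{itemize}
    \item The index $j$ in Lemmas~\ref{lemma:5-7}(2) and~\ref{lemma:5-8} must refer to the same ordering of the Jordan blocks $t_1\ge\dots\ge t_s$ of $e_\sigma$ over $\overline K$. This holds because both lemmas are applied to the same representative $e_\sigma$.
    \item The dimension of the split group $\Sp_{2a}$ must be quoted correctly as $2a^2+a$. This is the one fact not stated explicitly earlier.
    \item Lemma~\ref{lemma:5-11} applies to $U_\sigma$ because $U_\sigma$ is connected as the unipotent radical. This is already absorbed into Lemma~\ref{lemma:5-12}.
\end{itemize}
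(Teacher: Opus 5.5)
Your proposal is correct and is essentially the paper's own proof. You reindex the orbit sum by $\mathcal D_m$ via Lemma~\ref{lemma:5-10}, take the numerator from Lemmas~\ref{lemma:5-1} and~\ref{lemma:5-8}, take the denominator from Lemmas~\ref{lemma:5-7}(2), \ref{lemma:5-12} and~\ref{lemma:5-13}, and use $\dim \mathrm{Sp}_{2a}=a(2a+1)$ so that the $\sum_j jt_j$ terms cancel, exactly as the paper does.
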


\begin{proof}
    We may write \[
        A_m(q) = \sum_\cO\frac{|C_\frg(e_\cO)^F|}{|C_\sG(e_\cO)^F|} = \sum_{\sigma\in \cD_m} \frac{|C_\frg(e_\sigma)^F|}{|C_\sG(e_\sigma)^F|},
    \]
    where the first equality is by definition, and the second is by Lemma~\ref{lemma:5-10}. For $\sigma \in \cD_m$, Lemma~\ref{lemma:5-1} gives \[
        |C_\frg(e_\sigma)^F| = q^{\dim_{\overline K}C_\frg(e_\sigma)}.
    \]
    By Lemma~\ref{lemma:5-8}, \[
        \dim_{\overline K}C_\frg(e_\sigma) = \sum_{j =1}^s jt_j,
    \] and by Lemma~\ref{lemma:5-7}, 
    \[
        \dimvar C_\sG(e_\sigma) = \sum_{j = 1}^s (jt_j - \chi_\sigma(t_j)).
    \]
    By Lemma~\ref{lemma:5-12}, \[
        |C_\sG(e_\sigma)^F| = q^{\dimvar C_\sG(e_\sigma) - \dimvar R_\sigma} |R_\sigma^F|.
    \]
    Therefore \[
       \frac{|C_\frg(e_\sigma)^F|}{|C_\sG(e_\sigma)^F|} = \frac{q^{\dim_{\overline K}C_\frg(e_\sigma)}}{q^{\dimvar C_\sG(e_\sigma) - \dimvar R_\sigma} |R_\sigma^F|} = \frac{q^{\sum_{j = 1}^s \chi_\sigma(t_j) + \dimvar R_\sigma}}{|R_\sigma^F|}.
    \]
    The result follows by evaluating the denominator using Lemma~\ref{lemma:5-13}, and using $\dimvar \Sp_{2a} = a(2a + 1)$ in the numerator.
\end{proof}

\section{An Example Computation for $\sp_4$}

In this section, we compute the smallest nontrivial case of Theorem~\ref{thm:euler} using Theorem~\ref{thm: 5-1}. That is, we compute the number of commuting pairs in $\sp_4(q)\times \sp_4(q)$.

By Theorem~\ref{thm:euler}, the coefficient of $u^2$ in the generating function is
\[
    \frac{N_2(q)}{|\Sp_{4}(\F_q)|}
    =
    \binom{I(1)}{2}A_1(q)^2 + I(1)A_2(q) + I(2)A_1(q^2).
\]
Note that $I(1) = q$, $I(2) = \frac12(q^2 - q)$, and
\[
    |\Sp_4(\F_q)| = q^4(q^2 - 1)(q^4 - 1).
\]
So it remains to compute $A_1(q)$, $A_2(q)$, and $A_1(q^2)$. In the expression for $A_m(q)$ in Theorem~\ref{thm: 5-1}, we sum over admissible normal forms $\sigma\in \cD_m$. In the case $A_1(q)$, these are only $W(1)$ and $V(2)$, and in the case $A_2(q)$, these are $ W(1)^2, W(2), W(1)+V(2), V(2)^2, V(4)$. For each of these summands, the data that needs to be determined is the size of the Jordan blocks of $e_\sigma$ on $V_{\overline K}\downarrow e_\sigma$, the index function $\chi_\sigma$, and the reductive quotient $R_\sigma$. The Jordan block data and index functions are provided by Lemma~\ref{lemma:5-3}.

Let us compute $A_1(q)$. If $\sigma = W(1)$, the Jordan blocks are $(1,1)$, the index function is $\chi_\sigma = [1;0] = 0$, and $R_\sigma = \Sp_2$. Hence the contribution is
\[
    \frac{q^{\dim \Sp_2}}{|\Sp_2(\F_q)|} = \frac{q^3}{q(q^2 - 1)} = \frac{q^2}{q^2 - 1}.
\]
If $\sigma = V(2)$, the Jordan blocks are $(2)$, the index function is $\chi_\sigma = [2;1]$, so $\chi_\sigma(2) = 1$, and $R_\sigma = 1$. Therefore the contribution is $q$. Thus
\[
    A_1(q) = \frac{q^2}{q^2 - 1} + q.
\]

Now let us compute $A_2(q)$. If $\sigma = W(1)^2$, the Jordan blocks are $(1,1,1,1)$, the index function is $\chi_\sigma = [1;0] = 0$, and $R_\sigma = \Sp_4$. Hence the contribution is
\[
    \frac{q^{\dim \Sp_4}}{|\Sp_4(\F_q)|} = \frac{q^{10}}{q^4(q^2 - 1)(q^4 - 1)} = \frac{q^6}{(q^2 - 1)(q^4 - 1)}.
\]
If $\sigma = W(2)$, the Jordan blocks are $(2,2)$, the index function is $\chi_\sigma = [2;0] = 0$, and $R_\sigma = \Sp_2$. Hence the contribution is
\[
    \frac{q^{\dim \Sp_2}}{|\Sp_2(\F_q)|} = \frac{q^3}{q(q^2 - 1)} = \frac{q^2}{q^2 - 1}.
\]
If $\sigma = W(1)+V(2)$, the Jordan blocks are $(2,1,1)$, and the index function is
\[
    \chi_\sigma = \max\{[1;0],[2;1]\} = [2;1].
\]
Thus
\[
    \chi_\sigma(2)+\chi_\sigma(1)+\chi_\sigma(1)=1+0+0=1.
\]
The reductive quotient is again $R_\sigma = \Sp_2$. Hence the contribution is
\[
    \frac{q^{1+\dim \Sp_2}}{|\Sp_2(\F_q)|} = \frac{q^4}{q(q^2 - 1)} = \frac{q^3}{q^2 - 1}.
\]
If $\sigma = V(2)^2$, the Jordan blocks are $(2,2)$, and the index function is $\chi_\sigma = [2;1]$. Hence
\[
    \chi_\sigma(2)+\chi_\sigma(2)=1+1=2.
\]
There is no reductive quotient, so $R_\sigma = 1$, and the contribution is $q^2$.
If $\sigma = V(4)$, the Jordan blocks are $(4)$, and the index function is $\chi_\sigma = [4;2]$. Hence $\chi_\sigma(4)=2$. Again $R_\sigma=1$, so the contribution is $q^2$.

Combining these contributions gives
\[
    A_2(q) = \frac{q^6}{(q^2 - 1)(q^4 - 1)} + \frac{q^2}{q^2 - 1} + \frac{q^3}{q^2 - 1} + q^2 + q^2.
\]
Therefore
\[
    A_2(q) = \frac{q^6}{(q^2 - 1)(q^4 - 1)} + \frac{q^2+q^3}{q^2 - 1} + 2q^2.
\]

Substituting into the coefficient formula gives
\[
\begin{aligned}
    \frac{N_2(q)}{|\Sp_4(\F_q)|} &= \binom{q}{2} \left(\frac{q^2}{q^2 - 1}+q\right)^2 + q\left(\frac{q^6}{(q^2 - 1)(q^4 - 1)} + \frac{q^2+q^3}{q^2 - 1} + 2q^2 \right) \\
    & + \frac{q^2-q}{2} \left(\frac{q^4}{q^4 - 1}+q^2\right).
\end{aligned}
\]
Simplifying gives
\[
    \frac{N_2(q)}{|\Sp_4(\F_q)|} = \frac{q^4(q^6+2q^5-q^2-2q+1)}{(q^2 - 1)(q^4 - 1)}.
\]
Since $|\Sp_4(\F_q)| = q^4(q^2 - 1)(q^4 - 1)$, we get
\[
    N_2(q) = q^8(q^6+2q^5-q^2-2q+1).
\]
Equivalently,
\[
    N_2(q) = q^{14}+2q^{13}-q^{10}-2q^9+q^8.
\]
Thus we have proved the following.

\begin{prop} \label{prop:7-1}
For $q$ a power of $2$,
    \[
    \#\{(X,Y)\in \sp_4(\F_q)^2 \mid [X,Y]=0\} = q^{14}+2q^{13}-q^{10}-2q^9+q^8.
\]
\end{prop}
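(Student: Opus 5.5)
The plan is to read off the $u^2$ coefficient of the generating function in Theorem~\ref{thm:euler} and multiply by $|\Sp_4(\F_q)|$. Concretely, a local datum with $\sum_f d_f m_f = 2$ has one of three shapes: two distinct linear polynomials $f$ each with $m_f=1$; one linear polynomial with $m_f=2$; or one irreducible quadratic with $m_f=1$. The coefficient of $u^2$ is therefore $\binom{I(1)}{2}A_1(q)^2 + I(1)A_2(q) + I(2)A_1(q^2)$, with $I(1)=q$ and $I(2)=(q^2-q)/2$.

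Next I will evaluate $A_1$ and $A_2$ using Theorem~\ref{thm: 5-1}. This requires enumerating $\cD_1$ and $\cD_2$. For $m=1$ the candidates are $W(1)$ and $V(2)$; a $W_l(a)$ needs $0<l<a/2$ and so $a\ge 3$, which is too large. For $m=2$ the candidates are $W(1)^2$, $W(2)$, $W(1)+V(2)$, $V(2)^2$ and $V(4)$. Here $W_l(a)$ again does not fit, and $V(2k)^{c}$ with $c\le 2$ gives only these. For each $\sigma$ I will read the Jordan type and $\chi_\sigma$ from Lemma~\ref{lemma:5-3} and Lemma~\ref{lemma:5-5}(4), and $R_\sigma$ from Lemma~\ref{lemma:5-7}. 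For example, in $W(1)+V(2)$ the index function is the max $[2;1]$, so $\sum\chi_\sigma(t_j)=1$ and $R_\sigma=\Sp_2$. Summing the contributions gives $A_1(q)=\frac{q^2}{q^2-1}+q$ and
\[
A_2(q)=\frac{q^6}{(q^2-1)(q^4-1)}+\frac{q^2+q^3}{q^2-1}+2q^2.
\]
Then $A_1(q^2)$ is obtained by substituting $q^2$ for $q$.

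The last step is algebraic simplification. I will put everything over the common denominator $(q^2-1)(q^4-1)$, multiply by $|\Sp_4(\F_q)|=q^4(q^2-1)(q^4-1)$, and collect terms to reach $q^{14}+2q^{13}-q^{10}-2q^9+q^8$.

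The main obstacle is correctly enumerating $\cD_2$ and the associated $\chi$ and $R_\sigma$ data, including confirming that no $W_l(a)$ terms arise and that the conditions in the definition of $\cD_m$ do not exclude $V(2)+W(1)$. The final simplification is routine, but I will check it as I go, for instance by confirming the $q=2$ value numerically.
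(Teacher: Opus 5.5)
Your proposal is correct and follows the paper's proof essentially step for step. You take the same $u^2$-coefficient $\binom{I(1)}{2}A_1(q)^2+I(1)A_2(q)+I(2)A_1(q^2)$ from Theorem~\ref{thm:euler}, the same enumeration of $\mathcal{D}_1$ and $\mathcal{D}_2$ with the same Jordan types, $\chi_\sigma$ and $R_\sigma$, the same values of $A_1$ and $A_2$, and the same final simplification, which does give $q^8(q^6+2q^5-q^2-2q+1)$ (for instance $30976$ at $q=2$).
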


\begin{remark}
    Proposition~\ref{prop:7-1} has been verified by exhaustive computation for $q=2$ and $q=4$. We represent each element of $\sp_4(\F_q)$ as
    \[
        \begin{pmatrix}
            P & Q\\
            R & P^{\mathsf T}
        \end{pmatrix},
    \]
    where $Q=Q^{\mathsf T}$ and $R=R^{\mathsf T}$. For each $X\in\sp_4(\F_q)$, the computation finds the rank of the linear map $Y\mapsto[X,Y]$ on the $10$-dimensional space $\sp_4(\F_q)$ and adds $q^{10-\operatorname{rank}(\operatorname{ad}X)}$. The resulting totals are $30{,}976$ for $q=2$ and $401{,}145{,}856$ for $q=4$, in agreement with Proposition~\ref{prop:7-1}. The SageMath implementation is supplied in Appendix~\ref{app:A}.
\end{remark}

\section*{Acknowledgments}
The work was supported by the NSF under grant DMS-2447229 as well as Jane Street. The authors gratefully acknowledge the financial support of NSF and Jane Street, and thank Texas State University for providing a great working environment and support.
Yong Yang was also partially supported by a grant from the Simons Foundation (\#918096, to YY). Liam May was also partially supported by the MIT Department of Mathematics.

\section*{Disclosure Statement}
The authors declare that they have no competing interests and no conflicts of interest.

\section*{Data Availability Statement} The SageMath \cite{sagemath} code used for the exhaustive checks in Remark~7.2 is supplied in Appendix~\ref{app:A}. No external data sets were used.

\begin{appendices}

\newpage
\section{Computational Verification}\label{app:A}

The following SageMath code verifies Proposition~\ref{prop:7-1} for
$q=2$ and $q=4$. It enumerates the elements $X\in\sp_4(\F_q)$,
computes the rank of $\operatorname{ad}X$, and sums
$q^{10-\operatorname{rank}(\operatorname{ad}X)}$.

\medskip

\begingroup
\footnotesize
\begin{verbatim}
from itertools import product

def commuting_pair_count(q):
    K = GF(q, "a")

    def sp4_element(v):
        P = matrix(K, 2, 2, v[:4])
        Q = matrix(K, 2, 2,
                   [v[4], v[5], v[5], v[6]])
        R = matrix(K, 2, 2,
                   [v[7], v[8], v[8], v[9]])
        return block_matrix([[P, Q],
                             [R, P.transpose()]])

    def coordinates(X):
        return [X[0,0], X[0,1], X[1,0], X[1,1],
                X[0,2], X[0,3], X[1,3],
                X[2,0], X[2,1], X[3,1]]

    basis = []
    for i in range(10):
        v = [K.zero()] * 10
        v[i] = K.one()
        basis.append(sp4_element(v))

    total = 0
    distribution = {}
    elements = tuple(K)

    for v in product(elements, repeat=10):
        X = sp4_element(v)
        images = [coordinates(X*Y - Y*X)
                  for Y in basis]
        rank = matrix(K, images).rank()
        dimension = 10 - rank
        distribution[dimension] = (
            distribution.get(dimension, 0) + 1
        )
        total += q**dimension

    return total, distribution

for q in [2, 4]:
    total, distribution = commuting_pair_count(q)
    formula = (q**14 + 2*q**13 - q**10
               - 2*q**9 + q**8)

    print("q =", q)
    print("ordered commuting pairs =", total)
    print("formula value =", formula)
    print("centralizer dimensions =",
          sorted(distribution.items()))

    assert total == formula
\end{verbatim}
\endgroup

\end{appendices}

\newpage
\bibliographystyle{amsalpha}
\bibliography{Bibliography}

\end{document}